\newcommand{\rom}[1]{\uppercase\expandafter{\romannumeral #1\relax}}
\numberwithin{equation}{section}
\newtheorem{theorem}{Theorem}[section]
\newtheorem{lemma}[theorem]{Lemma}
\newtheorem{corollary}[theorem]{Corollary}
\newtheorem{proposition}[theorem]{Proposition}
\newtheorem{remark}[theorem]{Remark}
\newtheorem{condition}[theorem]{Assumption}
\def\thetheorem {{\arabic{section}.\arabic{theorem}}}
\newcommand*\circled[1]{\tikz[baseline=(char.base)]{
            \node[shape=circle,draw,inner sep=0.8pt] (char) {#1};}}
\def\k{\kappa}
\def\d{\delta}
\def\g{\gamma}
\def\o{\omega}
\def\p{\phi}
\def\D{\Delta}
\def\p{\t{\rho}}
\def\Up{\Upsilon}
\def\bz{{\bf{z}}}
\def\ba{{\bm{\alpha}}}
\def\R{\mathbb R}
\def\l{\left}
\def\r{\right}
\def\la{\left\langle}
\def\ra{\right\rangle}
\def\ll{\left\lVert}
\def\rl{\right\rVert}
\def\lv{\left\lvert}
\def\rv{\right\rvert}
\def\({\left(}
\def\){\right)}
\def\[{\left[}
\def\]{\right]}
\def\pt{\partial}
\def\qd{\quad}
\def\t{\tilde}
\def\h{\hat}
\def\bn{\binom}
\def\m{\t{m}}
\def\pinf{\rho^\infty}
\def\minf{\m^\infty}
\def\rinf{r^\infty}
\def\tp{\rho}
\def\tm{m}
\def\tt{{{\theta}}}
\def\Tpn{{{\bm{\tp}}}^n}
\def\Tpl{{{\bm{\tp}}}^l}
\def\Tmn{{{\bm{\tm}}}^n}
\def\Tml{{{\bm{\tm}}}^l}
\def\Ttn{{{\bm{\theta}}}^n}
\def\bPk{{\bf{\Phi}}^K}
\def\hp{\h{\tp}}
\def\hpk{\h{\tp}^K}
\def\Hpk{\h{{\bm{\tp}}}^K}
\def\hm{\h{\tm}}
\def\hmk{\h{\tm}^K}
\def\Hmk{\h{{\bf{\tm}}}^K}
\def\Htk{\h{{\bm{\theta}}}^K}
\def\hht{\h{\theta}}
\def\htk{\h{\theta}^K}
\def\bpk{\bar{\tp}^K}
\def\bmk{\bar{\tm}^K}
\def\epk{\varepsilon_\tp^K}
\def\ppk{\varrho_\tp^K}
\def\Bpk{{\bm{\varepsilon}}_\tp^K}
\def\emk{\varepsilon_\tm^K}
\def\pmk{\varrho_\tm^K}
\def\Bmk{{\bm{\varepsilon}}_\tm^K}
\def\Ek{\h{E}^K}
\def\Eek{E_\varepsilon^K}
\def\Epi{E^n_\pi}
\def\u{\mu}
\def\ta{\eta}
\def\EHn{E_{H^n_\pi}}
\begin{document}

\title{An Uncertainty Quantification Approach to the Study of Gene Expression Robustness}

\author{ 
Pierre Degond\footnote{Department of Mathematics, Imperial College London, South Kensington Campus,
London SW7 2AZ, United Kingdom. (pdegond@imperial.ac.uk)}\ \ \ \ \
Shi Jin\footnote{School of Mathematical Sciences, Institute of Natural Sciences, MOE-LSEC and SHL-MAC, Shanghai Jiao Tong University, Shanghai 200240, China. (shijin-m@sjtu.edu.cn) }\ \ \ \ \  Yuhua Zhu\footnote{Department of Mathematics, Stanford University, Stanford, CA 94305, United States. (yuhuazhu@stanford.edu)}  \ \  \  \ \  }

\date{}
\maketitle

\vspace{-0.7cm}
\begin{center}
Dedicated to Professor Ling Hsaio’s 80th birthday
\end{center}

\vspace{0.2cm}

\begin{abstract}
We study { a chemical kinetic system} with uncertainty { modeling a gene regulatory network in biology. Specifically, we consider a system of two equations for the messenger RNA and micro RNA content of a cell. Our target is to provide a simple framework for noise buffering in gene expression through micro RNA production.} Here the uncertainty, modeled by random variables, enters the system through the initial data and the source term.  We obtain a sharp decay rate of the solution to the steady state, which reveals that the biology system is not sensitive to the initial perturbation around the steady state.  The sharp regularity estimate leads to the stability of the generalized Polynomial Chaos stochastic Galerkin (gPC-SG) method.  Based on the smoothness of the solution in the random space and the stability of the numerical method, we conclude the gPC-SG method has spectral accuracy.  Numerical experiments are conducted to verify the theoretical findings. 
\end{abstract}

{\small
{\bf Key words. Gene Expression, generalized Polynomial Chaos, sensitivity analysis, spectral accuracy}  

{\bf AMS subject classifications.} {35Q92, 92C37, 65M70, 65M12}

}

\section{Introduction}
\label{sec:Intro}
{
In this paper, we are interested in a model of a simple gene regulatory network describing the regulation of the transcription of nuclear DNA by microRNAs (further denoted by $\mu$RNA). The synthesis of a protein from its DNA sequence involves several steps: the binding of a transcription factor (which can be a protein or another type of molecule) on the gene promotor sequence initiates the transcription of DNA into messenger RNA (further denoted by mRNA). mRNA is later translated into proteins in the ribosomes. Here, we are specifically interested in the first step, i.e. the transcription of DNA into mRNA. This transcription is subject to a high level of noise due for instance to noise in the availability of transcription factors. Yet, cells have to perform functions with a high level of precision and some noise buffering systems must be at play. In recent years, the role of $\mu$RNA has attracted focus. $\mu$RNAs are very short RNA sequences which are coded by non protein-coding sequences of the nuclear DNA. They seem to have (among other roles) a role in the regulation of transcription. Indeed, in many cases, the transcription factor initiates transcription of DNA into both the mRNA and a regulatory $\mu$RNA. The synthetized $\mu$RNA binds on the mRNA and prevents its translation into proteins. It has been argued that the main function of this regulation is to reduce the effect of noise in the transcription process (see \cite{Bleris_MolSystBiol11, Blevins_PlosGenetics15} and the review \cite{Herranz_GenesDev10}).

Our model involves a pair of chemical kinetic equations for the mRNA and $\mu$RNA content, with source terms modeling the action of the transcription factor. The effect of the noise is taken into account by adding some uncertainty in the noise term and the initial data. We are interested in looking at how this uncertainty propagates to the mRNA content and in comparing this uncertainty between situations including $\mu$RNA production or not. The uncertainty is modeled by random variables with given probability density functions. 

A classical approach to the study of noise in gene regulatory networks is through the chemical master equation \cite{vanKampen} which is solved numerically by means of Gillespie's algorithm \cite{Gillespie_JPhysChem77}, see e.g. \cite{Bosia_BMCSystBiol12, Osella_PlosCB11}. Here, we use the chemical kinetic approach, which is a valid approximation of the chemical master equation when ther number of molecules is large. However, this approximation is far from being valid in a cell. This is why we mitigate this discrepancy by assuming a random availability of transcription factors. The advantage is a considerably simpler treatment than with the chemical master equation while preserving the important features of the system. An alternate approach presented in \cite{Deg_Her_Mir} considers Brownian perturbations in the chemical kinetic equations. Introducing the joint probability density for mRNA and $\mu$RNA leads to a Fokker-Planck equation which can be analytically solved under some time-scale separation hypotheses. Underlying this approach is the idea that random perturbations do not only affect the initial condition and the source term, but are present at all times. In the present work, we restrict to random perturbation of the source term and initial data which allows us to use the simpler framework of uncertainty quantification. 
}

We will mainly focus on two aspects of this problem. First, we study how a random perturbation near the steady state will affect the system by analyzing the long-time behavior of the perturbative solution in the random space in terms of the weighted Sobolev norm $H^n_\pi$, where $\pi$ is the probability density function of the random variable. We also study the stability and the convergence rate of a numerical method to the system with uncertainty, specifically, the generalized Polynomial Chaos approximation based stochastic Galerkin (gPC-SG) method.

There are plenty of developments regarding the sensitivity analysis and convergence analysis in uncertainty quantification.  For example, the solution to elliptic equations,  parabolic equations, \cite{babuska2004galerkin, cohen2010convergence, cohen2011analytic}, and kinetic equations \cite{HuJin18, li2016uniform, ZhuJin18, jin2016august,  Jintwophase, JinLiu18, Zhu2017BE, ZhuVPFP18}. To our knowledge, there has been no such analysis done { to a system of chemical kinetic equations describing a gene regulatory network}.

There are mainly two difficulties in the analysis. The first one is in the sensitivity analysis in the random space. When we do estimates on the Sobolev norm $H^n_\pi$, the size of the nonlinear terms will increase to $O(2^n)$. This will result in a strong assumption on the initial data, that is, the initial randomness is required to be as small as $O(1/2^n)$ to get an exponential decay. Similarly, when we do the stability analysis of the gPC-SG method, if we approximate the solution by $K$-th order polynomial chaos bases, the size of the nonlinear terms in the resulting deterministic system will be $O(K!)$. If we directly do energy estimates on the approximate solution, then we can only prove stability when the initial randomness is as small as $O(1/K!)$. To sum up, how to get a sharp estimate in terms of $n$ and $K$ without strong assumption on the initial data or steady state is the main difficulty in this problem.

 In this paper, we obtain a sharp decay of the random perturbation around the steady state in terms of its Sobolev norm $H^n_\pi$ through a carefully designed weighted energy norm.  Under some mild conditions on the initial data that is independent of $n$, we find that the random perturbation near the steady state will decay exponentially. Our results also reveal that the solution preserves the regularity in the random space.  Moreover, with another weighted energy norm, we prove the stability of the $K$-th order gPC-SG method with an assumption on initial data independent of $K$. The smoothness of the solution in the random space and the stability of the gPC-SG method allows us to prove the spectral convergence of the gPC-SG method. When approximating the numerical solution by the $K$-th order polynomial chaos basis, the error of the approximation solution in $H^n_\pi$ is $O(K^{-n})$.

This paper is organized as follows. Section \ref{sec: model} gives an introduction { to the chemical kinetic system modeling the targeted gene regulatory network} and its corresponding steady state. The main result and proof sketch about the sensitivity of the system under random perturbation near steady state is stated in Section \ref{sec: sensitivity results}. The proof of this result is in the following Section \ref{sec: sensitivity}.  In Section \ref{sec: gPC method}, the gPC-SG method is introduced and the stability and the convergence rate of this method are stated in Section \ref{sec: gPC result}. The proof of these two results are in Section \ref{sec: stability} and \ref{sec: spectral} respectively. In Section \ref{sec: numerics} we numerically study how the presence of RNA inuences the noise in the concentration of
unbound mRNA.

\section{The model}
\label{sec: model}
Consider the following model,
\begin{equation}
\label{eq: model}
\begin{cases}
&\pt_t\p = S(z) - a\p - c\p \m,\\
&\pt_t \m = S(z) - b\m -c\p\m,
\end{cases}
\end{equation}
with initial data $\p(0,z), \m(0,z).$ $a, b, c$ are positive constants. Here $\p(t,z), \m(t,z)$ { respectively stand for the content of unbound mRNA and $\mu$RNA of a cell at time $t$. $S(z)$ is the source term which models the production of mRNA and $\mu$RNA through DNA transcription. We assume that a molecule of $\mu$RNA is produced each time a molecule of mRNA is produced, hence the same source term arises in the two equations. The production of mRNA and $\mu$RNA is subject to the availability of the transcription factor, which is random. We model this randomness by assuming that the source term is a given function of a random variable $z$ (modelling for instance the concentration of transcription factors) with probability density function $\pi(z)$ on a compact set $I_z \subset \R$. The first equation of \eqref{eq: model} models the decay of unbound mRNA through its binding to an unbound $\mu$RNA (the term $- c\p \m$) or through other degradation mechanisms (the term $- a\p$). The second equation of \eqref{eq: model} describes the decay of unbound $\mu$RNA through its binding to an unbound mRNA (the term $- c\p \m$ again) and through other degradation mechanisms (the term $- b\m$). Note that the binding of $\mu$RNA to mRNA consumes one molecule of $\mu$RNA and one molecule of mRNA at the same time, which explains why the same loss term is involved in the two equations. The remaining unbound mRNA is then supposed to enter the translation process into proteins through the actions of ribosomes. This step is supposed to occur later and is not included in the model. }

If one sets $\pt_t \p = \pt_t\m = 0$, one can get the steady state $\pinf(z), \minf(z)$, 
\begin{equation}
\label{eq: stationary}
 \pinf = b\rinf,\qd \minf = a\rinf,\qd \text{with } \rinf(z) =\frac{1}{2c}\(-1+\sqrt{\D}\)>0, \qd \D =1+\frac{4cS(z)}{ab} >1.
\end{equation}
Let $(\tp, \tm) = \(\p - \pinf,  \m - \minf\)$ be the random perturbative solution around the steady state, then $\(\tp(t,z), \tm(t,z)\)$ satisfies
\begin{equation}
\label{eq: perturb}
\l\{
\begin{aligned}
&\pt_t\tp = - \(a+ac\rinf\)\tp - bc\rinf \tm - c\tp\tm,\\
&\pt_t \tm = - \(b +bc\rinf\) \tm -ac\rinf\tp -c\tp\tm,
\end{aligned}
\r.
\end{equation}
with initial data,
\begin{equation*}
\tp(0,z) = \p(0,z) - \pinf(z), \qd \tm(0,z) = \m(0,z) - \minf(z).
\end{equation*}

\subsection{Main results and proof sketch}
\label{sec: sensitivity results}
We are interested in the estimates for the solution $\(\tp, \tm\)$ in the random space using the norm 
\begin{equation}
\ll \tp(t) \rl_{H^n_\pi}^2 = \sum_{i=0}^n\int  (\pt^i_z\tp)^2\pi(z)dz, \qd \ll \tm(t) \rl_{H^n_\pi}^2 = \sum_{i=0}^n\int (\pt^i_z\tm)^2\pi(z)dz.
\end{equation}
There are two reasons why we are interested in this Sobolev norm. First, by studying this norm, we can understand how sensitive the system with respect to the random perturbation around the steady state is and how this perturbation evolves in time. Second, this norm gives the Sobolev regularity of the solution in the random space. We will approximate the solution by the gPC-SG method in the random space in Section $\ref{sec: gPC method}$. Such regularity allows us to prove the spectral convergence of the method.

The difficulty in the analysis is to get an estimate of $\ll \tp \rl^2_{H^n_\pi}, \ll \tm \rl^2_{H^n_\pi}$ that is sharp for large $n$. For $n=0$, one can do standard energy estimates on $a\ll \tp \rl^2+ b\ll \tm \rl^2$ to get an exponential decay of the random perturbation in time under a smallness assumption on initial data. We will show the result for  $n=0$ in the following lemma, and explain why it is not trivial to extend it to   $n>0$ after the proof of the lemma. 

\begin{lemma}
\label{lemma: est_0}
    If initially, the random perturbations satisfy
    \begin{equation*}
        \ll \tp(0) \rl^2_\pi \leq \frac{b^2}{4c^2} , \qd \ll \tm(0) \rl^2_\pi \leq \frac{a^2}{4c^2},
    \end{equation*}
    then the perturbations $(\ll \tp(t) \rl^2_\pi, \ll \tm(t) \rl^2_\pi)$  decay exponentially in time as follows,
    \begin{equation*}
        \ll \tp(t) \rl^2_\pi \leq \frac{1}{a}\(a\ll \tp(0) \rl^2_\pi + b\ll \tm(0) \rl^2_\pi\)e^{-at},  \qd \ll \tm(t) \rl^2_\pi \leq \frac{1}{b}\(a\ll \tp(0) \rl^2_\pi + b\ll \tm(0) \rl^2_\pi\)e^{-bt}.
    \end{equation*}
\end{lemma}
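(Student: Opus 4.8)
The plan is to run the weighted $L^2_\pi$ energy estimate suggested by the symmetry of \eqref{eq: perturb}, based on the energy
\begin{equation*}
E(t) = a\ll\tp(t)\rl_\pi^2 + b\ll\tm(t)\rl_\pi^2 .
\end{equation*}
Differentiating in $t$ and substituting \eqref{eq: perturb}, I would separate the linear from the nonlinear contributions. The key algebraic observation is that all terms carrying the factor $c\rinf$ assemble into a perfect square: the diagonal pieces $-2a^2c\rinf\tp^2$ and $-2b^2c\rinf\tm^2$, together with the two cross terms $-2abc\rinf\tp\tm$, combine into $-2c\rinf(a\tp+b\tm)^2$, which is pointwise nonpositive and may be discarded. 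What remains of the linear part is the clean dissipation $-2a^2\ll\tp\rl_\pi^2-2b^2\ll\tm\rl_\pi^2$, while the quadratic loss terms $-c\tp\tm$ in the two equations produce a cubic remainder, so that
\begin{equation*}
\frac{d}{dt}E = -2a^2\ll\tp\rl_\pi^2 - 2b^2\ll\tm\rl_\pi^2 - 2c\int\rinf(a\tp+b\tm)^2\,\pi\,dz - 2c\int\tp\tm(a\tp+b\tm)\,\pi\,dz .
\end{equation*}

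The main obstacle is the last, cubic, term, which has no definite sign and cannot be dominated by the $L^2_\pi$ dissipation on its own. My approach is to absorb it into the quadratic dissipation by regrouping pointwise in $z$, writing the integrand of the dissipative part as $-2a\tp^2(a+c\tm)-2b\tm^2(b+c\tp)$. The smallness hypothesis is calibrated precisely so that the factors $a+c\tm$ and $b+c\tp$ stay positive: the bounds $\ll\tp(0)\rl_\pi^2\le b^2/4c^2$ and $\ll\tm(0)\rl_\pi^2\le a^2/4c^2$ give $c\ll\tp(0)\rl_\pi\le b/2$ and $c\ll\tm(0)\rl_\pi\le a/2$, which is exactly what bounds these factors below (by $a/2$ and $b/2$). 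I expect this to be the delicate point, because \eqref{eq: perturb} is an ODE pointwise in $z$ whereas the hypothesis controls only the $L^2_\pi$ average; I would therefore close this step with a continuation (bootstrap) argument, showing that as long as the solution remains in the smallness region $E$ is nonincreasing, and that the control so obtained keeps the solution inside the region for all $t$. This produces a differential inequality $\frac{d}{dt}E\le -\lambda E$, hence exponential decay of $E$ with $\lambda$ of order $\min(a,b)$.

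Finally, to recover the two distinct rates $e^{-at}$ and $e^{-bt}$ rather than a single common rate, I would not argue through $E$ alone but track the two components, testing the first equation of \eqref{eq: perturb} against $\tp$ and the second against $\tm$ separately, feeding in the decay already obtained to handle the cross-coupling. Since the self-decay rates of the two equations are $a+ac\rinf\ge a$ and $b+bc\rinf\ge b$, I would attempt an integrating-factor argument, controlling $\ll\tp\rl_\pi^2 e^{at}$ and $\ll\tm\rl_\pi^2 e^{bt}$, with the common constant $a\ll\tp(0)\rl_\pi^2+b\ll\tm(0)\rl_\pi^2=E(0)$ appearing because the coupling lets each component borrow from the other's initial energy. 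I anticipate that disentangling this coupling — so that each component genuinely inherits its own rate $a$ or $b$, rather than the slower common rate that the plain energy estimate delivers — will be the most technical part of the argument.
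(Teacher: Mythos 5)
Your proposal follows essentially the same route as the paper: the energy $a\tp^2+b\tm^2$, the observation that the $c\rinf$ terms assemble into the nonpositive perfect square $-c\rinf(a\tp+b\tm)^2$ and can be discarded, absorption of the cubic remainder by the smallness hypothesis via a continuation argument, and a final splitting step that gives each component its own rate $e^{-at}$, $e^{-bt}$ with the common constant $E(0)$. (The paper absorbs the cubic with Young's inequality, $-ac\tp^2\tm\le c^2\tp^2\tm^2+\frac{a^2}{4}\tp^2$, rather than your regrouping $-2a\tp^2(a+c\tm)$, but both reduce to the same requirement $c^2\tm^2\le a^2/4$, $c^2\tp^2\le b^2/4$; and the paper disentangles the rates by integrating the differential inequality, dropping one of the two integral terms, and applying the integral form of Gronwall to each component, which is a cleaner version of your integrating-factor idea.)

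The one point where your plan as written would not close is exactly the one you flag: the sign of $a+c\tm(z,t)$ and $b+c\tp(z,t)$ is a pointwise-in-$z$ condition, and no bootstrap performed at the level of the integrated energy $E(t)=a\ll\tp\rl_\pi^2+b\ll\tm\rl_\pi^2$ can manufacture the required pointwise lower bounds from an $L^2_\pi$ bound on the initial data. The paper's resolution is to run the entire argument pointwise in $z$: since \eqref{eq: perturb} is an uncoupled ODE system for each fixed $z$, one assumes the pointwise smallness $\tp^2(0,z)\le b^2/(4c^2)$, $\tm^2(0,z)\le a^2/(4c^2)$ for all $z\in I_z$ (the paper's (3.5)), runs the continuation argument and Gronwall for each $z$ separately, and only integrates against $\pi(z)\,dz$ at the very last step. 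You should adopt this pointwise formulation; note that the paper's own lemma statement quotes only the integrated hypothesis while its proof actually uses the stronger pointwise one, so the hypothesis you should verify (and state) is the pointwise version.
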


\begin{proof}
Multiplying $a\tp$ and $b\tm$ to the two equations in (\ref{eq: perturb}) respectively, and then adding them together gives,
\begin{equation}
\label{eq: n=0}
\begin{aligned}
    \frac{1}{2}\pt_t\(a\tp^2 + b\tm^2\) =& - a^2\tp^2 - b^2\tm^2\underbrace{-\(a^2c\rinf \tp^2 + 2abc\rinf \tm\tp + b^2c\rinf\tm^2\)}_{\text{linear part}} \underbrace{- ac\tp^2\tm - bc\tm^2\tp}_{\text{nonlinear part}}\\
    \leq & -a^2\tp^2 -b^2\tm^2\underbrace{ -c\rinf \(a\tp + b\tm \)^2 }_{\text{linear part}}\underbrace{+ c^2\tp^2\tm^2 + \frac{a^2}{4}\tp^2 + c^2\tp^2\tm^2 + \frac{b^2}{4}\tm^2}_{\text{nonlinear part}} \\
    \leq & - \(\frac{3a}{4} - \frac{c^2}{a}\tm^2 \)a\tp^2 - \(\frac{3b}{4}  - \frac{c^2}{b}\tp^2 \)b\tm^2,
\end{aligned}
\end{equation}
where we apply Young's inequality to the nonlinear part in the first line to obtain the first inequality. Since $\rinf$ defined in (\ref{eq: stationary}) is always positive for any $z\in I_z$, this gives $-c\rinf \(a\tp + b\tm \)^2\leq 0$, so we can omit this term in the second inequality. 

After we obtain the inequality as in (\ref{eq: n=0}), the exponential decay of $\tp^2, \tm^2$ follows from a smallness assumption on the initial condition.  Assume the coefficients of $a\tp^2$ and $b\tm^2$ on the RHS of (\ref{eq: n=0}) are smaller than $-\frac{a}{2}$ and  $-\frac{b}{2}$ respectively, which is equivalent to assume
\begin{equation}
\label{eq: est_0 initial with z}
    \tm^2(0,z) \leq \frac{a^2}{4c^2} , \qd \tp^2(0,z) \leq \frac{b^2}{4c^2}, \qd \text{for all } z\in I_z,  
\end{equation}
then by continuity argument, for all $t>0$, one has
\begin{equation*}
    \frac{1}{2}\pt_t\(a\tp^2 + b\tm^2\) \leq  - \frac{a^2}{2}\tp^2 - \frac{b^2}{2} \tm^2.
\end{equation*}
Integrating the above equation over time, one gets, 
\begin{equation*}
    a\tp^2(t) + b\tm^2(t) \leq  a\tp^2(0) + b\tm^2(0) - \int_0^t a^2\tp(s)^2ds -\int_0^t b^2 \tm(s)^2ds, 
\end{equation*}
which implies
\begin{equation*}
\begin{aligned}
    &a\tp^2(t) \leq  a\tp^2(0) + b\tm^2(0) - a^2\int_0^t\tp(s)^2ds ,\\
    & b\tm^2(t) \leq  a\tp^2(0) + b\tm^2(0) - b^2\int_0^t \tm(s)^2ds.
\end{aligned}     
\end{equation*}
By Grownwall's inequality, one can get the exponential decay of $\tp^2, \tm^2$ as follows, 
\begin{equation*}
    \tp^2(t,z) \leq  \frac{1}{a}\(a\tp^2(0,z) + b\tm^2(0,z)\)e^{-at},\qd \tm^2(t,z) \leq  \frac{1}{b}\(a\tp^2(0,z) + b\tm^2(0,z)\)e^{-bt}.     
\end{equation*}

Finally, one integrates (\ref{eq: est_0 initial with z}) and the above estimates over $\pi(z)dz$, one completes the proof of Lemma \ref{lemma: est_0}.

\end{proof}

The difficulties of extending the results in Lemma \ref{lemma: est_0} to $\ll \tp \rl^2_{H^n_\pi}, \ll \tm \rl^2_{H^n_\pi}$ are mainly due to two reasons. First when $n=0$, the linear part in (\ref{eq: n=0}) is a negative square without any assumption on $\rinf$, so we can directly omit these terms in the estimates. However, if we directly do energy estimates on $\ll \tp \rl^2_{H^n_\pi}, \ll \tm \rl^2_{H^n_\pi}$ for $n>0$, we have to assume $\sum_{i=1}^n\lv \pt_z^i\rinf \rv \leq O(1/n!)$ to make the linear part negative and this assumption is too strong. Second, the nonlinear part will be $O(n(n!)^2)$  if we directly estimate on $\ll \tp \rl^2_{H^n_\pi}, \ll \tm \rl^2_{H^n_\pi}$ for $n>0$. This implies that one needs to assume the initial data $\ll \tp(0) \rl^2_{H^n_\pi}, \ll \tm(0) \rl^2_{H^n_\pi}$ as small as $O(1/n/(n!)^2)$ to get the exponential decay in time. We will explain it in more details in the following paragraph. 

In order to simplify the notation, we set 
\begin{equation}
\label{def of Tpn}
\begin{aligned}
    &\tt = a\tp+b\tm;\qd \\
    &\Tpn = (\tp, \pt_z\tp, \cdots, \pt_z^n\tp), \qd \text{similar for } \Tmn, \Ttn,
\end{aligned}
\end{equation}
and let $\ll \cdot \rl^2$ be the regular  Euclidean norm, 
\begin{equation*}
    \ll \Tpn \rl^2 = \sum_{l=0}^n\tp^2_l.
\end{equation*}
If we directly do energy estimates on $a\ll \Tpn \rl^2 +b\ll \Tmn \rl^2$, then we will get the following inequality
by taking $\pt_z^l$ ($0\leq l\leq n$) to (\ref{eq: perturb}), then multiplying $a\tp_l, b\tm_l$ respectively and adding all equations together, 
\begin{equation}
\label{eq: pt_zl perturb}
\begin{aligned}
&\frac{1}{2}\pt_t\(a\ll \Tpn \rl^2 + b\ll \Tmn \rl^2\) \\
= &-a^2\ll\Tpn\rl^2-b^2\ll\Tmn\rl^2 + \underbrace{\sum_{l=0}^n\l[- c\pt_z^l\(\rinf \tt \) \r]\tt_l}_{\text{linear part}} +\underbrace{\sum_{l=0}^n- ac\pt_z^l(\tp\tm)\pt_z^l\tp- bc\pt_z^l(\tp\tm)\pt_z^l\tm}_{\text{nonlinear part}}
\end{aligned}
\end{equation} 
First, for the linear part, when $n=0$, the linear part is automatically a negative square term, so we do not need to bound this term any more. However, when $n>0$, since $\rinf$ in the linear term depends on $z$, so taking $\pt_z^l$ to the linear terms gives
\begin{equation}
\label{eq: explain linear proc}
\begin{aligned}
    &\text{linear part}
    = -c\sum_{l=0}^n\pt_z^l\(\rinf\tt\)\tt_l = -c\rinf\ll \Ttn \rl^2-c\sum_{l=1}^n\sum_{i=1}^l\bn{l}{i}\pt_z^i\rinf\(\pt_z^{l-i}\tt\)\(\pt_z^l\tt\).
\end{aligned}
\end{equation}
Since $-\pt_z^i\rinf(z), i\geq1$ are not necessarily negative, only the first term in the last equality of the above equation is negative. Therefore,  we need to bound all other terms using the first negative term. By applying Young's inequality and Cauchy-Schwatz inequality to all other terms gives, 
\begin{equation*}
\begin{aligned}
\lv-c\sum_{l=1}^n\sum_{i=1}^l\bn{l}{i}\pt_z^i\rinf\(\pt_z^{l-i}\tt\)\(\pt_z^l\tt\) \rv \leq& \frac{c}{2}\sum_{l=1}^n\sum_{i=1}^l\bn{l}{i}\lv\pt_z^i\rinf\rv\(\(\pt_z^{l-i}\tt\)^2 + \(\pt_z^l\tt\)^2\)\\
\lesssim& \bn{n}{[n/2]}\(\sum_{i=1}^n\lv\pt_z^i\rinf\rv\) \ll \Ttn \rl^2,
\end{aligned}
\end{equation*}
where $[n/2]$ represents the smallest integer that is larger than or equal to $n/2$. The coefficient can be upper bounded by
\begin{equation*}
    \bn{n}{[n/2]} \leq 2^n,
\end{equation*}
this  implies only when 
\begin{equation}
\label{eq: explain linear proc_2}
    \sum_{i=1}^n\lv\rinf_i\rv \leq O\(\frac{\rinf}{2^n}\),
\end{equation}
the RHS of  (\ref{eq: explain linear proc}) is non-positive. Obviously, the constraint (\ref{eq: explain linear proc_2}) on $\rinf(z)$ is too strong. Only a small set of steady states are included in this analysis.  So we will develop another method to avoid that.

Second, for the nonlinear part in (\ref{eq: pt_zl perturb}), since the two terms are similar, we only estimate the first nonlinear term. Applying Young's inequality gives
\begin{equation}
\label{eq: pm explain}
\begin{aligned}
    &\lv\sum_{l=0}^n ac\pt_z^l\(\tp\tm\) \pt_z^l\tp \rv \leq c^2\sum_{l=0}^n \(\pt_z^l\(\tp\tm\)\)^2 + \frac{a^2}{4}\ll \Tpn \rl^2=\sum_{l=0}^n\(\sum_{i=0}^l\bn{l}{i}\pt_z^i\tm\pt_z^{l-i}\tp\)^2 + \frac{a^2}{4}\ll \Tpn \rl^2\\
    \leq& \sum_{l=0}^n\(\sum_{i=0}^l\bn{l}{i}\(\pt_z^i\tm\)^2\)\(\sum_{i=0}^l\bn{l}{i}\(\pt_z^{l-i}\tp\)^2\)+ \frac{a^2}{4}\ll \Tpn \rl^2
     \leq \bn{l}{[l/2]}^2\ll \Tml \rl^2 \ll\Tpl\rl^2+ \frac{a^2}{4}\ll \Tpn \rl^2\\
     \leq&  c^2 n\bn{n}{[n/2]}^2\ll \Tmn \rl^2 \ll \Tpn \rl^2+ \frac{a^2}{4}\ll \Tpn \rl^2 \leq c^2 n 2^{2n}\ll \Tmn \rl^2 \ll \Tpn \rl^2+ \frac{a^2}{4}\ll \Tpn \rl^2,
\end{aligned}
\end{equation}
 where the first inequality comes from Cauchy-Schwatz inequality. 
One can get similar inequality for $\lv\sum_{l=0}^n bc\pt_z^l(\tp\tm)\pt_z^l\tm\rv$. Therefore, if one ignores the linear terms in (\ref{eq: pt_zl perturb}), one ends up with the following estimates,
\begin{equation*}
\begin{aligned}
    &\frac{1}{2}\pt_t\(a\ll \Tpn \rl^2+b \ll \Tmn \rl^2\) \\
    \leq & -\(\frac{3a}{4} - \frac{c^2}{a} 2^{2n}n \ll \Tmn \rl^2\)a\ll \Tpn \rl^2-\(\frac{3b}{4} - \frac{c^2}{b} 2^{2n}n \ll \Tpn \rl^2\)b\ll \Tmn \rl^2, 
\end{aligned}
\end{equation*}
which implies that we have to assume 
\begin{equation}
\label{eq: explain nonlinear proc_2}
    \ll \Tpn(0) \rl^2,  \ll \Tmn(0) \rl^2 \leq O\(\frac{1}{  2^{2n} n}\)
\end{equation}
to get an exponential decay as follows
\begin{equation*}
    \ll \Tpn(t) \rl^2 \leq O(e^{-at}), \qd \ll \Tmn(t) \rl^2 \leq O(e^{-bt}).
\end{equation*}
If one integrates the above two equations over $\pi(z)dz$, then one will get the corresponding result in the Sobolev space. However, this result is too weak for large $n$. If the initial perturbation is smooth enough in the random space, then $\ll \tp(0) \rl , \ll \tm(0) \rl \in H^n_\pi$ for any large n.  However, by the above result, only for the initial random perturbation $\ll \tp(0) \rl^2_{H^n_z}, \ll \tm(0) \rl^2_{H^n_z}$ that are as small as $O(1/4^nn)$,  then $\ll \tp(t) \rl^2_{H^n_z}, \ll \tm(t) \rl^2_{H^n_z}$ will decay exponentially in time.

In our analysis, we overcome the two difficulties mentioned above by adding a weight $\o_i^*$ to $\tp_i, \tm_i$. Then we will only have an assumption on the initial data that is independent of $n$, furthermore, we only require $\rinf$ to satisfy the following assumption. 
\begin{condition}
\label{cond: rinf}
    There exists a constant $\k$ such that, the derivative of $\rinf$ in the random space can be bounded by
    \begin{equation}
        \label{def of k}
        \sup_{z\in I_z} |(i+1)^2\pt_z^i \rinf| \leq \k^{i+1}i!,
    \end{equation}
    and it is bounded below and above by $r, R$ respectively, 
    \begin{align}
     &r \leq \rinf \leq R, \qd \forall z\in I_z.\label{def of r, R}
\end{align}
\end{condition}
This condition is not strict at all. Actually for any analytic function $\rinf(z)$ in a compact set $I_z$, there exists a constant $C$, such that 
    \begin{equation*}
        \lv \pt_z^i \rinf\rv \leq C^{i+1}i!, \qd\text{for }\forall i \geq 0,\qd \forall z\in I_z.
    \end{equation*}
    Then set 
     \begin{equation*}
        \k = eC, 
    \end{equation*}
    one can always get 
        \begin{equation*}
        \lv (i+1)^2\pt_z^i \rinf\rv \leq \k^{i+1}i!, \qd\text{for }\forall i \geq 0,\qd \forall z\in I_z.
    \end{equation*}

The weight $\o^*_i$ we add to $\tp_i, \tm_i$ in $\ll \Tpn \rl^2, \ll \Tmn \rl^2$ is
\begin{equation*}
    \o^*_i = \frac{L^{n-i}}{\k^i}\frac{(i+1)^2}{i!}, 
\end{equation*}
where $\k$ is the constant in (\ref{def of k}), L is a constant depending on $\k$, which we will define later. In this weight, the term $\frac{(i+1)^2}{i!}$ is used to avoid strong assumption on initial data like (\ref{eq: explain nonlinear proc_2}). Notice that with $\frac{1}{i!}$, the factorial in (\ref{eq: pm explain}) can be absorbed into the weights, so one can get rid of $O(1/(n!)^2)$ in the initial assumption; while the weight $(i+1)^2$ is used to deal with $O(1/n)$ in the assumption.  Another part of the weight $\frac{L^{n-i}}{\k^i}$ is used to avoid strong constraint on $\rinf$ like  (\ref{eq: explain linear proc_2}). Under Assumption \ref{cond: rinf}, the term $\frac{1}{k^i}$ can be used to bound $\lv \rinf_i \rv$. One further notices that when $i$ is smaller, $\tt_i$ will be summed for more times, so the term $L^{n-i}$ is used to balance this. Please refer to Lemma \ref{lemma: sensitivity nonlinear} for details.

The $\frac{(i+1)^2}{i!}$ part of the weight is first introduced in \cite{Jintwophase}; However, the assumption on $\rinf$ and its corresponding weight $L^{n-i}/\k^i$ haven't been developed before.

Before we present the main theorems on the sensitivity of the perturbative solution $(\tp, \tm)$, we first list the frequently used notations here. 
\begin{itemize}
\item [--] $A, L$ are constants defined as,
\begin{align}
    &A = \sum_{i = 1}^\infty \frac{1}{i^2} = \frac{\pi^2}{6},\label{def of A}\\
    &L = \sqrt{\frac{16A\k^2}{r^2}+1}, \label{def of L}
\end{align}
where $\k$ is defined in (\ref{def of k}).
\end{itemize}

The following Theorem is about the sensitivity of the perturbative solution $(\tp, \tm)$ in the random space. 
\begin{theorem}
\label{thm: sensitivity}
For $\forall n\geq 0$, under assumption \ref{cond: rinf}, in addition, if initially 
\begin{equation}
\label{sens cond}
\begin{aligned}
    &\ll \tm(0) \rl^2_{H^n_\pi} \leq a^2C_0, \qd \ll \tp(0) \rl^2_{H^n_\pi}\leq b^2C_0, 
\end{aligned}     
\end{equation}
then the perturbative solution to (\ref{eq: perturb}) satisfies,
\begin{equation*}
    \ll \tp(t) \rl^2_{H^n_\pi} \leq  \frac{\(5\nu^nn!\)^2}{a}\EHn(0)e^{-at},\qd
    \ll \tm(t) \rl^2_{H^n_\pi}  \leq  \frac{\(5\nu^nn!\)^2}{b}\EHn(0)e^{-bt},
\end{equation*}
where $\EHn(0) = a\ll \tp(0) \rl^2_{H^n_\pi}+ b\ll \tm(0) \rl^2_{H^n_\pi}$. Here $C_0, \nu$ are constants independent of $n$, $C_0 =(5^22^5Ac^2)^{-1} $, $\nu= \k L$ and $L,A, \k$ are constants defined in (\ref{def of L}), (\ref{def of A}), (\ref{def of k}) respectively.  
\end{theorem}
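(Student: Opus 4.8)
The plan is to prove the estimate for all $n$ simultaneously through a single weighted energy whose weight $\o^*_l=\frac{L^{n-l}}{\k^l}\frac{(l+1)^2}{l!}$ is designed to neutralize both difficulties described above at once. Concretely, I would set
\begin{equation*}
\Epi(t)=a\sum_{l=0}^n\o^*_l\ll\pt_z^l\tp(t)\rl_\pi^2+b\sum_{l=0}^n\o^*_l\ll\pt_z^l\tm(t)\rl_\pi^2,
\end{equation*}
apply $\pt_z^l$ to \eqref{eq: perturb} for $0\le l\le n$, multiply the two identities by $a\o^*_l\pt_z^l\tp$ and $b\o^*_l\pt_z^l\tm$, sum over $l$ and integrate against $\pi\,dz$. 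As in the derivation of \eqref{eq: pt_zl perturb} this yields
\begin{equation*}
\tfrac12\pt_t\Epi=-a^2\sum_l\o^*_l\ll\pt_z^l\tp\rl_\pi^2-b^2\sum_l\o^*_l\ll\pt_z^l\tm\rl_\pi^2+(\text{linear})+(\text{nonlinear}),
\end{equation*}
with the linear part built from $-c\,\pt_z^l(\rinf\tt)\,\tt_l$ and the nonlinear part from $-ac\,\pt_z^l(\tp\tm)\pt_z^l\tp-bc\,\pt_z^l(\tp\tm)\pt_z^l\tm$, exactly as in \eqref{eq: explain linear proc} and \eqref{eq: pm explain}. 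Everything then reduces to showing that, once the data is small, the two bracketed parts are dominated by the negative leading terms.

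The decisive computation is the linear estimate, and I expect it to be the main obstacle, since it is the genuinely new ingredient (the factorial part $\frac{(l+1)^2}{l!}$ of the weight is from \cite{Jintwophase}, but the analytic part $\frac{L^{n-l}}{\k^l}$ is not). After isolating the favorable diagonal piece $-c\sum_l\o^*_l\rinf\,(\pt_z^l\tt)^2\le -cr\sum_l\o^*_l(\pt_z^l\tt)^2$, the remaining commutator terms $\bn{l}{i}(\pt_z^i\rinf)(\pt_z^{l-i}\tt)(\pt_z^l\tt)$ with $i\ge1$ have to be controlled by this diagonal. The key algebraic identity, which uses Assumption \ref{cond: rinf} together with the elementary bound $(l-i+1)(i+1)\ge l+1$, is
\begin{equation*}
\frac{\o^*_l}{\o^*_{l-i}}\bn{l}{i}\lv\pt_z^i\rinf\rv\le\frac{\k}{L^i}\,\frac{(l+1)^2}{(l-i+1)^2(i+1)^2}\le\frac{\k}{L^i}.
\end{equation*}
I would then apply a weighted Young inequality to each commutator term so that $(\pt_z^l\tt)^2$ is measured against its own weight $\o^*_l$ and $(\pt_z^{l-i}\tt)^2$ against $\o^*_{l-i}$, summing the resulting geometric factors $\k/L^i$ and the convergent series $\sum_{i\ge1}1/(i+1)^2\le A$. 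The choice $L=\sqrt{16A\k^2/r^2+1}$ in \eqref{def of L} is made precisely so that the total commutator contribution is a fixed fraction of $cr\sum_l\o^*_l(\pt_z^l\tt)^2$, rendering the linear part nonpositive \emph{without} any smallness assumption on the derivatives of $\rinf$ beyond Assumption \ref{cond: rinf}. Balancing the two ends of this bilinear form against the same diagonal energy, with constants uniform in $n$, is the delicate part of the whole argument.

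For the nonlinear part I would reuse the mechanism of \eqref{eq: pm explain}, now with the weight doing the bookkeeping: writing $\pt_z^l(\tp\tm)=\sum_i\bn{l}{i}(\pt_z^i\tm)(\pt_z^{l-i}\tp)$, the identity $\bn{l}{i}=\frac{l!}{i!(l-i)!}$ lets the $l!$ be absorbed by the $1/l!$ in $\o^*_l$, while the $(l+1)^2$ factor together with $\sum_i 1/(i+1)^2\le A$ controls the remaining combinatorics, so that after Young's inequality the nonlinear part is bounded by $Cc\sqrt{\Epi}\big(a\sum_l\o^*_l\ll\pt_z^l\tp\rl_\pi^2+b\sum_l\o^*_l\ll\pt_z^l\tm\rl_\pi^2\big)$ with $C$ independent of $n$. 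Collecting the three estimates gives
\begin{equation*}
\tfrac12\pt_t\Epi\le-\Big(\tfrac{3a}{4}-Cc\sqrt{\Epi}\Big)a\sum_l\o^*_l\ll\pt_z^l\tp\rl_\pi^2-\Big(\tfrac{3b}{4}-Cc\sqrt{\Epi}\Big)b\sum_l\o^*_l\ll\pt_z^l\tm\rl_\pi^2.
\end{equation*}
A continuity (bootstrap) argument, seeded by the $n$-independent hypothesis \eqref{sens cond} with $C_0=(5^22^5Ac^2)^{-1}$ so that $Cc\sqrt{\Epi(0)}\le a/4,\,b/4$, keeps both coefficients below $-a/2$ and $-b/2$ for all $t$; here it is essential that the weighted nonlinear bound carries no factorial loss, so that the threshold on the data stays independent of $n$. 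Finally, exactly as in the split ending the proof of Lemma \ref{lemma: est_0}, I would discard the nonnegative cross terms and run Gronwall separately on the $\tp$ and $\tm$ sums to produce the two distinct rates $e^{-at}$ and $e^{-bt}$, and then convert the weighted energy back to $H^n_\pi$. This conversion costs the ratio of the extreme weights $\o^*_0/\o^*_n\sim\nu^n n!$, $\nu=\k L$, which enters once through the data and once through the solution, and which after combining with the constants from the Young and Gronwall steps is bounded by the stated prefactor $(5\nu^n n!)^2$.
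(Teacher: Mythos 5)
Your proposal follows essentially the same route as the paper: the same weights $\o^*_l=L^{n-l}(l+1)^2/(\k^l l!)$, the same key commutator bound $\frac{\o^*_l}{\o^*_{l-i}}\binom{l}{i}\lv\pt_z^i\rinf\rv\le \k/L^i$ absorbed into the diagonal term through the choice of $L$, the same Young/Cauchy--Schwarz treatment of the nonlinear convolution, and the same continuity--Gronwall--reweighting conclusion (this is exactly Lemma 3.2 and Section 3 of the paper). One notational point: for the bookkeeping to close, each derivative must be weighted by $\o^*_l$ itself, i.e.\ the quadratic energy carries $(\o^*_l)^2$ as in the paper's $\ll\Tpn_\o\rl^2$ --- with the first-power weights as literally written in your $E^n_\pi$ the commutator coefficient would be $\sqrt{\o^*_l/\o^*_{l-i}}\binom{l}{i}\lv\pt_z^i\rinf\rv$, which still retains a factor of order $\sqrt{\binom{l}{i}}$; your own key identity and the final prefactor $(5\nu^n n!)^2$ already correspond to the squared-weight convention, so this is a slip of notation rather than of substance.
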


\begin{remark}
    The above theorem tells us that as long as the initial random perturbation around the steady state is small enough, then the perturbation will exponentially decay with a rate of $e^{-at}, e^{-bt}$ for $\tp, \tm$ respectively. 
\end{remark}

\section{Proof of Theorem \ref{thm: sensitivity} (The sensitivity analysis around the steady state)}
\label{sec: sensitivity}

In this section, we are going to analyze how $\EHn = a\ll \tp \rl^2_{H^n_\pi} + b \ll \tm \rl^2_{H^n_\pi}$ evolves in time by studying $E^n$, 
\begin{align}
\label{def of En}
    &E^n = a\ll \Tpn_\o \rl^2+ b\ll \Tmn_\o \rl^2, 
\end{align}
where $\Tpn_\o, \Tmn_\o, \Ttn_\o$ are similarly defined as
\begin{align}
 &\Tpn_\o = \(\o^*_0\tp, \o^*_1\pt_z\tp, \cdots, \o^*_n\pt_z^n\tp\),
\end{align}
for weights $\o_i^*$ defined as,
\begin{equation}
\label{def of o}
    \o_i = \frac{(i+1)^2}{\k^ii!}, \qd \o^*_i = L^{n-i}\o_i.
\end{equation}
After taking the integration of the result for $E^n$ in the random space over $\pi(z)dz$, we can get the results for $E^n_\pi$,
\begin{align}
    &\ll \Tpn_\o \rl^2_\pi = \int \ll \Tpn_\o \rl^2\pi(z)dz, \qd \ll \Tmn_\o \rl^2_\pi =\int \ll \Tmn_\o \rl_{H^n_\pi}^2\pi(z)dz, \qd \Epi = a\ll \Tpn_\o \rl^2_\pi+b\ll \Tmn_\o \rl^2_\pi.\label{def of sens energy}
\end{align}
Using the relationship between $E^n_\pi$ and $\EHn(t) = a\ll \tp(t) \rl^2_{H^n_\pi}+ b\ll \tm(t) \rl^2_{H^n_\pi}$, we can get the exponential decay for $\EHn$.


The most important part in the proof is stated in the following Lemma \ref{lemma: sensitivity nonlinear}, which will be proved later. 
\begin{lemma}
\label{lemma: sensitivity nonlinear}
For $\rinf$ under Condition \ref{def of k}, and any vector function $\Tpn, \Tmn, \Ttn$, the following inequalities hold
\begin{equation}
\label{sens ineq_1}
    \sum_{l=0}^n\(\o^*_l\)^2\pt_z^l\(\tp \tm\)\pt_z^l\tp\leq \frac{2A}{\g L^n} \ll \Tpn_\o\rl^2\ll \Tmn_\o\rl^2 + \frac{2\g}{L^n}\ll \Tpn_\o \rl^2, \qd \forall \g>0;
\end{equation}
\begin{equation}
\label{sens ineq_2}
     -\sum_{l=0}^n\(\o^*_l\)^2\pt_z^l\(\rinf\tt\)\pt_z^l\tt \leq 0.
\end{equation}
\end{lemma}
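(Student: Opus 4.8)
The plan is to expand both products with the Leibniz rule and then exploit the precise algebraic form of the weights $\o^*_i = L^{n-i}(i+1)^2/(\k^i i!)$, whose entire purpose is to cancel the binomial coefficients and factorials produced by differentiation. Throughout I abbreviate $\tp_l = \pt_z^l\tp$, $\tm_i=\pt_z^i\tm$, $\tt_l=\pt_z^l\tt$, $\rinf_i=\pt_z^i\rinf$, and I pass to the weighted variables $v_l=\o^*_l\tp_l$, $u_i=\o^*_i\tm_i$, $w_l=\o^*_l\tt_l$, so that $\ll\Tpn_\o\rl^2=\sum_l v_l^2$, $\ll\Tmn_\o\rl^2=\sum_i u_i^2$, and $\ll\Ttn_\o\rl^2=\sum_l w_l^2$. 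The common engine will be the identity $\o^*_i\o^*_{l-i}=L^{2n-l}(i+1)^2(l-i+1)^2/(\k^l i!(l-i)!)$, which combines with $\bn{l}{i}=l!/(i!(l-i)!)$ so that all factorials and binomials cancel and only the harmless rational factors $(i+1)^{-2}(l-i+1)^{-2}$ and powers of $L$ survive.

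For \eqref{sens ineq_1}, write $\pt_z^l(\tp\tm)=\sum_{i=0}^l\bn{l}{i}\tm_i\tp_{l-i}=:X_l$. Bounding the signed left side by absolute values and applying Young's inequality $ab\le\v a^2+\frac1{4\v}b^2$ with $a=\o^*_l|\tp_l|$, $b=\o^*_l|X_l|$, and $\v=2\g/L^n$, I peel off exactly the term $\frac{2\g}{L^n}\ll\Tpn_\o\rl^2$ and reduce the claim to
\begin{equation*}
\sum_{l=0}^n(\o^*_l)^2X_l^2\le\frac{16A}{L^{2n}}\ll\Tpn_\o\rl^2\ll\Tmn_\o\rl^2.
\end{equation*}
Using $\bn{l}{i}/(\o^*_i\o^*_{l-i})=l!\,\k^l/(L^{2n-l}(i+1)^2(l-i+1)^2)$, the cancellation collapses the factorials and yields
\begin{equation*}
(\o^*_l)^2X_l^2=\frac{(l+1)^4}{L^{2n}}\l(\sum_{i=0}^l\frac{u_iv_{l-i}}{(i+1)^2(l-i+1)^2}\r)^2.
\end{equation*}
I then control the inner sum by Cauchy--Schwarz together with the partial-fraction identity $\frac1{(i+1)(l-i+1)}=\frac1{l+2}\(\frac1{i+1}+\frac1{l-i+1}\)$, which gives $\sum_{i}(i+1)^{-2}(l-i+1)^{-2}\le 4A/(l+1)^2$, and the elementary bound $(i+1)(l-i+1)\ge l+1$, which absorbs the leftover $(l+1)^4$. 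This leaves $\sum_l(\o^*_l)^2X_l^2\le\frac{4A}{L^{2n}}\sum_l\sum_{i}u_i^2v_{l-i}^2$, and the constrained convolution sum is $\le(\sum_iu_i^2)(\sum_jv_j^2)$ since every term is nonnegative. This is even sharper than the required constant $16A$, so \eqref{sens ineq_1} follows.

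For \eqref{sens ineq_2}, expand $\pt_z^l(\rinf\tt)=\sum_{i=0}^l\bn{l}{i}\rinf_i\tt_{l-i}$ and split off the $i=0$ term, which produces the genuinely negative main term $-\sum_l(\o^*_l)^2\rinf\tt_l^2\le -r\ll\Ttn_\o\rl^2$, and bound the remaining $i\ge1$ cross terms in absolute value. Using the derivative bound \eqref{def of k} in the form $|\rinf_i|\le\k^{i+1}i!/(i+1)^2$ and the weight ratio $\o^*_l/\o^*_{l-i}=L^{-i}\k^{-i}\frac{(l+1)^2}{(l-i+1)^2}\frac{(l-i)!}{l!}$, the same factorial cancellation reduces the coefficient of $|w_l||w_{l-i}|$ to $\frac{\k}{L^i}\frac{(l+1)^2}{(i+1)^2(l-i+1)^2}\le\frac{\k}{L^i}$, again by $(i+1)(l-i+1)\ge l+1$. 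Young's inequality and the geometric series $\sum_{i\ge1}L^{-i}=1/(L-1)$ then bound the cross terms by $\frac{\k}{L-1}\ll\Ttn_\o\rl^2$, so the whole left side is $\le\(-r+\frac{\k}{L-1}\)\ll\Ttn_\o\rl^2$. It remains to verify $\frac{\k}{L-1}\le r$: this is where the definition $L=\sqrt{16A\k^2/r^2+1}$ enters, together with the fact that the $i=0$ case of \eqref{def of k} forces $\k\ge R\ge r$ via \eqref{def of r, R}, so $\k/r\ge1$ and the elementary inequality $(16A-1)(\k/r)\ge2$ holds, which is exactly equivalent to $L\ge1+\k/r$.

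The main obstacle is the weighted combinatorial bookkeeping: checking that $\o^*_i\o^*_{l-i}$ and $\bn{l}{i}$ cancel exactly, so that only the rational weights $(i+1)^{-2}(l-i+1)^{-2}$ and the $L$-powers remain. Once this cancellation is in hand, both inequalities reduce to the two elementary facts $(i+1)(l-i+1)\ge l+1$ and $\sum_i(i+1)^{-2}\le A$, plus, for \eqref{sens ineq_2}, the purely arithmetic check that the chosen $L$ dominates $1+\k/r$ under $\k\ge r$. A subtle point I will be careful about is that the factor $\k^i$ hidden in $|\rinf_i|$ cancels against the $\k^{-i}$ in the weight ratio, so the only decay controlling high-order derivatives of $\rinf$ is $L^{-i}$; this is precisely why the constraint \eqref{def of k} on $\rinf$, rather than a much stronger one like $\sum_i|\rinf_i|\le O(\rinf/2^n)$, suffices.
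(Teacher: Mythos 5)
Your proof is correct and takes essentially the same route as the paper's: the same Leibniz expansion and the same weight-cancellation identity $(\o^*_l)^2\bn{l}{i}/(\o^*_i\o^*_{l-i}) \cdot (\o^*_l)^{-1}= (l+1)^2/\bigl(L^n(i+1)^2(l-i+1)^2\bigr)$, followed by Young's and Cauchy--Schwarz inequalities. The only cosmetic differences are that you use $(i+1)(l-i+1)\ge l+1$ together with the partial-fraction bound where the paper uses $(l+1)^2\le 2(i+1)^2+2(l-i+1)^2$ (your route even yields the sharper constant $4A$ in place of $16A$), and that for \eqref{sens ineq_2} you close with $\sum_{i\ge1}L^{-i}=1/(L-1)$ and the arithmetic check $L\ge 1+\k/r$ (valid, since the $i=0$ case of \eqref{def of k} gives $\k\ge \sup\rinf\ge r$), whereas the paper uses $\sum_{i\ge1}L^{-2i}\le 1/(L^2-1)=r^2/(16A\k^2)$ directly from the definition of $L$.
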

\begin{proof}
See Section \ref{sec: proof of lemma sensitivity nonlinear}.
\end{proof}

If one multiplies $\o^*_l$ to the two equations in (\ref{eq: pt_zl perturb}) and adds the two equations together, then sums $l$ from $0$ to $n$, one has, 
\begin{equation}
\label{eq: pt_zl_1}
\begin{aligned}
    &\frac{1}{2}\pt_tE^n = -a^2\ll \Tpn_\o \rl^2 -b^2\ll \Tmn_\o \rl^2  -c\sum_{l=0}^n\(\o^*_l\)^2\pt_z^l\(\rinf\tt\)\pt_z^l\tt -c\sum_{l=0}^n\(\o^*_l\)^2\pt_z^l\(\tp \tm\)\pt_z^l\tt.
\end{aligned}
\end{equation}
Based on (\ref{sens ineq_2}) in Lemma \ref{lemma: sensitivity nonlinear}, one can omit the third term on the RHS of (\ref{eq: pt_zl_1}). Furthermore, by setting $\g = \frac{L^n}{8c}$ in (\ref{sens ineq_1}), one can bound the nonlinear terms by
\begin{equation}
\label{eq: pt_zl_2}
\begin{aligned}
    \frac{1}{2}\pt_tE^n \leq& -a^2\ll \Tpn_\o \rl^2 -b^2\ll \Tmn_\o \rl^2 + c^2\frac{16A}{ L^{2n}} \ll \Tpn_\o\rl^2\ll \Tmn_\o\rl^2 + \frac{a^2}{4}\ll \Tpn_\o \rl^2 + \frac{b^2}{4}\ll \Tmn_\o \rl^2\\
    = &-\(\frac{3a}{4} - \frac{8c^2A}{ aL^{2n}}\ll \Tmn_\o\rl^2\)a\ll \Tpn_\o \rl^2 -\(\frac{3b}{4} - \frac{8c^2A}{ bL^{2n}}\ll \Tpn_\o\rl^2\)b\ll \Tmn_\o \rl^2. 
\end{aligned}
\end{equation}
Since (\ref{eq: pt_zl_2}) is similar to (\ref{eq: n=0}) in the proof of Lemma \ref{lemma: est_0}, by the continuity arguement, one can conclude that if initially, 
\begin{equation}
\label{eq: initial Tpn}
\begin{aligned}
    &\frac{8c^2A}{aL^{2n}}\ll \Tmn_\o(0) \rl_\pi^2 \leq \frac{a}{4}, \qd \frac{8c^2A}{bL^{2n}}\ll \Tpn_\o(0) \rl_\pi^2 \leq \frac{b}{4}, 
\end{aligned}     
\end{equation}
$\ll \Tpn_\o(t) \rl_\pi^2, \ll \Tmn_\o(t) \rl_\pi^2$ decay as follows, 
\begin{equation}
\label{eq: decay Tpn}
    \ll \Tpn_\o(t) \rl_\pi^2 \leq  \frac{\Epi(0)}{a}e^{-at},\qd \ll \Tmn_\o(t) \rl_\pi^2 \leq  \frac{\Epi(0)}{b}e^{-bt}.
\end{equation}
Now, we need to transfer (\ref{eq: initial Tpn}) and  (\ref{eq: decay Tpn}) to the Sobolev norm we want to estimate in the random space $\(\ll \tp \rl^2_{H^n_\pi}, \ll \tm \rl^2_{H^n_\pi}\)$. 
Since
\begin{equation*}
    \frac{1}{n!} \leq \frac{(i+1)^2}{i!} \leq 5, \qd \text{for }0\leq i \leq n,
\end{equation*}
so one has 
\begin{equation*}
    \frac{1}{\k^nn!}\leq \o_i^* \leq 5L^n, \qd \text{for }0\leq i \leq n,
\end{equation*}
which implies that, 
\begin{equation*}
    \(\frac{1}{\k^nn!}\)^2\ll \tp \rl^2_{H^n_\pi}\leq \ll \Tpn_\o(t) \rl_\pi^2 \leq \(5L^n\)^2\ll \tp \rl^2_{H^n_\pi}, 
\end{equation*}
and similar relationship can be obtained for $\ll \Tmn_\o(t) \rl_\pi^2$ and $\ll \tm \rl^2_{H^n_\pi}$. 
Therefore, the initial requirement (\ref{eq: initial Tpn}) becomes, 
\begin{equation*}
\begin{aligned}
    &\frac{5^28c^2A}{a}\ll \tm(0) \rl^2_{H^n_\pi} \leq \frac{a}{4}, \qd \frac{5^28c^2A}{b}\ll \tp(0) \rl^2_{H^n_\pi}\leq \frac{b}{4}, 
\end{aligned}     
\end{equation*}
then $\(\ll \tp \rl^2_{H^n_\pi}, \ll \tm \rl^2_{H^n_\pi}\)$ will decay as follows, 
\begin{equation*}
    \ll \tp(t) \rl^2_{H^n_\pi} \leq  \(5\k^nn!L^n\)^2\frac{\EHn(0)}{a}e^{-at},\qd
    \ll \tm(t) \rl^2_{H^n_\pi}  \leq  \(5\k^nn!L^n\)^2\frac{\EHn(0)}{b}e^{-bt},
\end{equation*}
where $\EHn(0) = a\ll \tp(0) \rl^2_{H^n_\pi}+ b\ll \tm(0) \rl^2_{H^n_\pi}$ and this is obtained from (\ref{eq: decay Tpn}). The above two equations give the final results in Theorem \ref{thm: sensitivity}. 

\subsection{Proof of Lemma \ref{lemma: sensitivity nonlinear}}
\label{sec: proof of lemma sensitivity nonlinear}
The following is the proof of Lemma \ref{lemma: sensitivity nonlinear}.
\begin{proof}
Expanding $\pt_z^l(\tp\tm)$ gives,
\begin{equation}
\label{nonlinear sens_0}
\begin{aligned}
    &\sum_{l=0}^n\(\o^*_l\)^2\pt_z^l\(\tp \tm\)\pt_z^l\tp = \sum_{l=0}^n\sum_{i=0}^l\(\o^*_l\)^2\bn{l}{i} \pt_z^i\tp \pt_z^{l-i}\tm\pt_z^l\tp . 
\end{aligned}
\end{equation}
 
First notice that 
\begin{equation}
\label{nonlinear sens_1}
\begin{aligned}
    &\lv\(\o^*_l\)^2\bn{l}{i} \pt_z^i\tp \pt_z^{l-i}\tm\pt_z^l\tp  \rv
    = \lv\frac{(l+1)^2L^n}{L^{i}L^{l-i}\k^i\k^{l-i}l!}\frac{l!}{i!(l-i)!}\pt_z^i\tp \pt_z^{l-i}\tm\(\o^*_l\pt_z^l\tp \)\rv\\
    =& \lv\frac{(l+1)^2}{(i+1)^2(l-i+1)^2L^n} \(\o^*_i\pt_z^i\tp\)\(\o^*_{l-i}\pt_z^{l-i}\tm\) \(\o^*_l\pt_z^l\tp\) \rv\\
    \leq& \frac{2}{L^n}\(\frac{1}{(i+1)^2} + \frac{1}{(l-i+1)^2}\) \(\o^*_i\pt_z^i\tp\)\(\o^*_{l-i}\pt_z^{l-i}\tm\) \(\o^*_l\pt_z^l\tp\),
\end{aligned}
\end{equation}
where the second inequality is because of $$(l+1)^2 \leq \((i+1)+(l-i+1)\)^2 \leq 2(i+1)^2 + 2(l-i+1)^2.$$
If one sums up the first part of (\ref{nonlinear sens_1}) over $i,l$, one has, 
\begin{equation}
\label{nonlinear sens_2}
\begin{aligned}
    &\frac{2}{L^n}\sum_{l=0}^n\sum_{i=0}^l \frac{1}{(i+1)^2} \(\o^*_i\pt_z^i\tp\)\(\o^*_{l-i}\pt_z^{l-i}\tm\) \(\o^*_l\pt_z^l\tp\) \\
    \leq & \frac{1}{\g L^n}\sum_{l=0}^n \(\sum_{i=0}^l \frac{1}{(i+1)^2} \(\o^*_i\pt_z^i\tp\)\(\o^*_{l-i}\pt_z^{l-i}\tm\)\)^2 +  \frac{\g}{L^n}\sum_{l=0}^n\(\o^*_l\pt_z^l\tp\)^2\\
    \leq & \frac{1}{\g L^n}\sum_{l=0}^n \(\sum_{i=0}^l \frac{1}{(i+1)^2}\)\(\sum_{i=0}^l \frac{1}{(i+1)^2}  \(\o^*_i\pt_z^i\tp\)^2\(\o^*_{l-i}\pt_z^{l-i}\tm\)^2\) + \frac{\g}{L^n}\ll \Tpn_\o \rl^2\\
    \leq & \frac{A}{\g L^n}\sum_{i=0}^n\sum_{l=i}^n \frac{1}{(i+1)^2}  \(\o^*_i\pt_z^i\tp\)^2\(\o^*_{l-i}\pt_z^{l-i}\tm\)^2+ \frac{\g}{L^n}\ll \Tpn_\o \rl^2\\
    \leq & \frac{A}{\g L^n}\sum_{i=0}^n \frac{1}{(i+1)^2}  \(\o^*_i\pt_z^i\tp\)^2\sum_{l-i=0}^n\(\o^*_{l-i}\pt_z^{l-i}\tm\)^2+ \frac{\g}{L^n}\ll \Tpn_\o \rl^2\\
    \leq &\frac{A}{\g L^n}\ll \Tpn_\o\rl^2 \ll \Tmn_\o\rl^2 + \frac{\g}{L^n}\ll \Tpn_\o \rl^2.
\end{aligned}
\end{equation}
The first inequality is obtained by applying Young's inequality, and then applying Cauchy-Schwartz inequality gives the second one. Since $l-i$ and $i$ are symmetric, so the second part of (\ref{nonlinear sens_1}) can be similarly bounded. Therefore, summing (\ref{nonlinear sens_1}) over $i,l$ gives an upper bound for the RHS of (\ref{nonlinear sens_0}). This implies
\begin{equation*}
\begin{aligned}
    \sum_{l=0}^n\(\o^*_l\)^2\pt_z^l\(\tp \tm\)\tp_l 
    \leq &\frac{2A}{\g L^n}\ll \Tpn_\o\rl^2 \ll \Tmn_\o\rl^2 + \frac{2\g}{L^n}\ll \Tpn_\o \rl^2.
    \end{aligned}
\end{equation*}
For the second inequality (\ref{sens ineq_2}), one first separates it into two parts, 
\begin{equation}
\label{nonlinear sens_3}
\begin{aligned}
    & -\sum_{l=0}^n\(\o^*_l\)^2\pt_z^l\(\rinf\tt\)\pt_z^l\tt  = -\sum_{l=0}^n\(\o^*_l\)^2\rinf\pt_z^l\tt\pt_z^l\tt -\sum_{l=1}^n\sum_{i=1}^{l}\(\o^*_l\)^2\bn{l}{i}\pt_z^i\rinf\pt_z^{l-i}\tt\pt_z^l\tt \\
    = &- \rinf\ll \Ttn_\o \rl^2 - \frac{2}{L^n}\sum_{l=1}^n\sum_{i=1}^{l}\(\frac{1}{(i+1)^2} + \frac{1}{(l-i+1)^2}\)\(\o_i^*\pt_z^i\rinf\)\(\o^*_{l-i}\pt_z^{l-i}\tt\)\(\o^*_l\pt_z^l\tt\)\\
    \leq &- \rinf\ll \Ttn_\o \rl^2 + \frac{1}{\g L^n}\sum_{l=1}^n\(\sum_{i=1}^{l}\frac{1}{(i+1)^2} \(\o_i^*\rinf_i\)\(\o^*_{l-i}\tt_{l-i}\)\)^2\\
    &+   \frac{1}{\g L^n}\sum_{l=1}^n\(\sum_{i=1}^{l}\frac{1}{(l-i+1)^2} \(\o_i^*\rinf_i\)\(\o^*_{l-i}\tt_{l-i}\)\)^2+\frac{2\g}{L^n}\sum_{l=0}^n \ll \Ttn_\o \rl^2\\
    \leq &- \frac{\rinf}{2}\ll \Ttn_\o \rl^2 + \frac{4}{L^{2n}\rinf}\sum_{l=1}^n\(\sum_{i=1}^{l}\frac{1}{(i+1)^2}\(\o_i^*\rinf_i\)\(\o^*_{l-i}\tt_{l-i}\)\)^2 \\
    &+  \frac{4}{L^{2n}\rinf}\sum_{l=1}^n\(\sum_{i=1}^{l}\frac{1}{(l-i+1)^2}\(\o_i^*\rinf_i\)\(\o^*_{l-i}\tt_{l-i}\)\)^2,
\end{aligned}
\end{equation}
where the second equality is obtained by applying (\ref{nonlinear sens_1}), then applying Young's inequality gives the first inequality, and setting $\g = \frac{L^n\rinf}{4}$ gives the last inequality. The second and third terms in the last inequality are similar to the first term in the second line of (\ref{nonlinear sens_2}), so according to the fourth line in (\ref{nonlinear sens_2}),  (\ref{nonlinear sens_3}) can be further simplified to
\begin{equation}
\label{eqn: I}
\begin{aligned}
    & -\sum_{l=0}^n\(\o^*_l\)^2\pt_z^l\(\rinf\tt\)\tt_l  \\
    \leq& - \frac{\rinf}{2}\ll \Ttn_\o \rl^2 + \frac{4A}{L^{2n}\rinf}\sum_{l=1}^n \sum_{i=1}^{l}\(\frac{1}{(i+1)^2}+\frac{1}{(l-i+1)^2} \) \(\o_i^*\rinf_i\)^2\(\o^*_{l-i}\tt_{l-i}\)^2\\
    =&  - \frac{\rinf}{2}\ll \Ttn_\o \rl^2 + \frac{4A}{\rinf}\sum_{l=1}^n\sum_{i=1}^{l} \(\frac{L^{-2i}}{(i+1)^2} + \frac{L^{-2i}}{(l-i+1)^2}\) \(\o_i\rinf_i\)^2\(\o^*_{l-i}\tt_{l-i}\)^2\\
    \leq&  - \frac{\rinf}{2}\ll \Ttn_\o \rl^2 + \frac{4A\k^2}{\rinf}\sum_{l=1}^n\sum_{i=1}^{l} \(\frac{L^{-2i}}{(i+1)^2} + \frac{L^{-2i}}{(l-i+1)^2}\) \(\o^*_{l-i}\tt_{l-i}\)^2\\
    = &- \frac{\rinf}{2}\ll \Ttn_\o \rl^2 + \frac{4A\k^2}{\rinf}\sum_{i=1}^n \frac{L^{-2i}}{(i+1)^2}\sum_{l=i}^{n} \(\o^*_{l-i}\tt_{l-i}\)^2 +\frac{4A\k^2}{\rinf}\sum_{i=1}^nL^{-2i} \sum_{l=i}^{n} \frac{\(\o^*_{l-i}\tt_{l-i}\)^2}{(l-i+1)^2}
\end{aligned}
\end{equation}
where the first equality  comes from the definition of $\o^*_i$ in (\ref{def of o}), and the second inequality is by Assumption \ref{cond: rinf}, $$\sup_{z\in I_z} \(\o_{i}\pt_z^{i}\rinf\)^2 \leq \k^2.$$ Furthermore, since
\begin{equation*}
\begin{aligned}
    \sum_{i=1}^n \frac{L^{-2i}}{(i+1)^2} \leq \sum_{i=1}^nL^{-2i }  \leq \frac{1}{(L^2-1)} \leq \frac{r^2}{16A\k^2},
\end{aligned}
\end{equation*}
by the definition of $L$ in (\ref{def of L}). Inserting it back to (\ref{eqn: I}) gives
\begin{equation*}
\begin{aligned}
    & -\sum_{l=0}^n\(\o^*_l\)^2\pt_z^l\(\rinf\tt\)\tt_l
    \leq - \frac{\rinf}{2}\ll \Ttn_\o \rl^2 + \frac{r}{4}\ll \Ttn_\o \rl^2  + \frac{r}{4}\ll \Ttn_\o \rl^2 \leq 0,
\end{aligned}
\end{equation*}
which completes the proof for the second inequality (\ref{sens ineq_2}).
\end{proof}

\section{The gPC-SG method}
\label{sec: gPC method}
\subsection{The numerical method}
In this section, we will introduce a numerical method for model (\ref{eq: model}), which enjoys spectral accuracy in the random space. 

For random variable $z$ with probability density function $\pi(z)$, there exists a corresponding orthogonal polynomial basis $\{\Phi_i\}_{i=0}^\infty$ with respect to the measure $\pi(z)dz$, which is orthonormal to each other in the weighted $L^2_\pi$ inner product,
\begin{align}
    \int_{I_z} \Phi_i\Phi_j \pi(z)dz = \d_{ij},
    \label{eq: orthonormal}
\end{align} 
where $\d_{ij}$ is the Kronecker delta function. The $K$-th order subspace is therefore spanned by $\{\Phi_i\}_{i= 0}^K$. As a popular numerical method, the generalized Polynomial Chaos stochastic Galarkin (gPC-SG) method is to  find the approximate solution in  the truncated $K$-th order subspace. That is, define the approximation solution of the perturbative $\tp, \tm$ in the form of,
\begin{align}
    \hpk(t,x,z) = \sum_{i=0}^{K} \hp_i(t, x)\Phi_i(z) ,\qd \hmk(t,x,z) = \sum_{i=0}^{K}  \hm_i(t,x)\Phi_i(z),
\end{align}
then insert $\hpk$, $\hmk$ into (\ref{eq: stationary}) and do Galerkin projection, so the approximation solution $\hpk, \hmk$ satisfies,
\begin{align}
\label{eq: approx}
\begin{cases}
    &\la\pt_t\hpk, \Phi_j\ra_\pi = \la -\(a+ac\rinf\)\hpk - bc\rinf \hmk - c\hpk\hmk,\Phi_j\ra_\pi,\qd 0\leq j\leq K,\\
&\la \pt_t \hmk, \Phi_j\ra_\pi = \la- \(b +bc\rinf\) \hmk -ac\rinf\hpk -c\hpk\hmk, \Phi_j\ra_\pi, \qd 0\leq j\leq K.
\end{cases}
\end{align}
Equivalently, (\ref{eq: approx}) can be written as a system of the deterministic coefficients of $\hpk$, $\hmk$, i.e. the vector functions $\Hpk(t, x)  = \(\hp_0(t, x),\cdots, \hp_K(t, x)\)^\top$,  $\Hmk(t, x)  = \(\hm_0(t, x),\cdots, \hm_K(t, x)\)^\top$ satisfiy,
\begin{equation}
\label{eq: determ}
\l\{
\begin{aligned}
    \pt_t \Hpk = -a\Hpk - ac\Up\Hpk - bc\Up\Hmk-c\(\sum_{i,j}\hm_i S^l_{ij}\hp_j\)_{l=0}^K, \\
    \pt_t \Hmk = -b\Hmk - bc\Up\Hmk - ac\Up\Hpk-c\(\sum_{i,j}\hm_i S^l_{ij}\hp_j\)_{l=0}^K, \\
\end{aligned}
\r.
\end{equation}
with initial data,
\begin{equation*}
\begin{aligned}
    \hp_j(0) = \la\tp(0,z), \Phi_j \ra_\pi, \qd \hm_j(0)  =   \la\tm(0,z), \Phi_j \ra_\pi , \qd 0\leq j \leq K.
\end{aligned}
\end{equation*}
Here $S^l, \Up$ are symmetric matrices defined as
\begin{align}
    S^l_{ij} = \int_{I_z} \Phi_i\Phi_j\Phi_l \,\pi(z)dz, \qd \Up_{ij} =  \int_{I_z} \rinf\Phi_i\Phi_j \,\pi(z)dz.
    \label{def E}
\end{align}

\subsection{Main results and proof sketch}
\label{sec: gPC result}
We will prove that the approximate solution obtained by the gPC-SG from solving the deterministic system (\ref{eq: determ}) has spectral accuracy. We will decompose the error of the approximate solution into two parts, one is the projection error, another is the Galerkin error.  The first part is determined by the regularity of the solution $(\tp, \tm)$ in the random space, while the second part is determined by the stability of the Galerkin system (\ref{eq: determ}).

Define the projection of the analytic perturbative solution $(\tp, \tm)$ onto the subspace $\{\Phi_i\}_{i=0}^K$ as, 
\begin{align}
    \bpk := \(\int \tp\bPk d\pi(z)\)  \cdot\bPk, \qd\bmk := \(\int \tm \bPk d \pi(\bz)\)\cdot \bPk,
\end{align}
where $\bPk(z)= \(\Phi_i\)_{i=0}^{ K}$ is the vector function that contains all basis functions up to the $K$-th order. Then we can decompose the error of the approximation solution $\(\hpk, \hmk\)$   into two parts,
\begin{align}
&\tp - \hpk = \underbrace{(\tp - \bpk)}_{\ppk} + \underbrace{(\bpk - \hpk)}_{ \epk},\label{eq: error_p}\\
& \tm - \hmk =  \underbrace{(\tm - \bmk)}_{\pmk} + \underbrace{(\bmk -\hmk)}_{\emk}\label{eq: error_m},
\end{align}
where $\(\ppk, \pmk\)$ represents for the  projection error, $\(\epk, \emk\)$ are errors from the stochastic Galerkin.
Especially, we set $(\Bpk, \Bmk)$ to be the vector function defined as,
\begin{equation}
\label{def of gPC error}
\begin{aligned}
&\epk = \Bpk\cdot \bPk: =\( \int (\tp - \hpk)\bPk d\pi(z)\) \cdot  \bPk ,\\
&\emk = \Bmk\cdot \bPk: =\( \int (\tm - \hmk)\bPk d\pi(z)\) \cdot  \bPk .\\
\end{aligned}
\end{equation}
Because of the orthonality of the bases, it is easy to check that
\begin{equation*}
\ll \epk \rl^2_\pi= \ll \Bpk \rl^2,\qd\ll \emk \rl^2_\pi= \ll \Bmk \rl^2 .\\
\end{equation*}

From Theorem \ref{thm: sensitivity}, one can bound $(\ppk,\pmk)$ as in the following Corollary.
\begin{corollary}
\label{coro}
Under the same initial condition as in Theorem \ref{thm: sensitivity}, the projection error decays in time exponentially according to,
\begin{align}
\label{def of D}
&    \ll \ppk \rl^2_\pi = \ll \tp - \bpk \rl^2_\pi \leq \frac{D\(\nu^{n}n!\)^2\EHn(0)}{a(K+1)^{2n}}e^{-at}, \qd\ll \pmk \rl^2_\pi = \ll \tm - \bmk \rl^2_\pi \leq  \frac{D\(\nu^{n}n!\)^2\EHn(0)}{b(K+1)^{2n}}e^{-bt},
\end{align}
for some constant $D$ related to the measure $\pi(z)dz$.
\end{corollary}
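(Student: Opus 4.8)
Corollary \ref{coro} bounds the *projection error* $\ppk = \tp - \bpk$ (the difference between the exact perturbative solution and its $L^2_\pi$-orthogonal projection onto the span of the first $K+1$ basis polynomials). The statement claims this decays like $(K+1)^{-2n}$ in the polynomial degree and inherits the exponential time decay $e^{-at}$ from Theorem \ref{thm: sensitivity}.

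**Key observations.** This is a spectral approximation error estimate. The two ingredients I'd expect to combine are: (1) a standard spectral/polynomial approximation theorem saying that for a function $f$ with Sobolev regularity $\|f\|_{H^n_\pi}$ in the random variable, the tail of its orthogonal polynomial expansion (equivalently the projection error onto degree $\le K$) is bounded by $C (K+1)^{-n} \|f\|_{H^n_\pi}$ (or its square, $(K+1)^{-2n}\|f\|^2_{H^n_\pi}$), where $C = D$ depends only on the measure $\pi(z)dz$; and (2) the time-dependent Sobolev bound on $\tp(t)$ already proved in Theorem \ref{thm: sensitivity}.

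**The decay rate and the time factor.** The claimed bound matches exactly what one gets by substituting the Theorem \ref{thm: sensitivity} estimate into the spectral estimate: Theorem \ref{thm: sensitivity} gives $\ll \tp(t) \rl^2_{H^n_\pi} \le \frac{(5\nu^n n!)^2}{a}\EHn(0)e^{-at}$, and applying the spectral bound $\ll \tp - \bpk \rl^2_\pi \le D(K+1)^{-2n}\ll \tp \rl^2_{H^n_\pi}$ produces precisely $\frac{D(\nu^n n!)^2 \EHn(0)}{a(K+1)^{2n}}e^{-at}$ once the universal constant $5^2$ is absorbed into $D$. So the proof is essentially two lines.

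**Proof proposal.**

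\begin{proof}
The starting point is the classical spectral approximation estimate for orthogonal polynomial expansions with respect to the measure $\pi(z)dz$: there exists a constant $D$, depending only on $\pi(z)dz$, such that for any function $f\in H^n_\pi$ and any $K\geq 0$, the orthogonal projection $\ol{f}^K := \(\int f\bPk\,d\pi(z)\)\cdot\bPk$ onto the span of $\{\Phi_i\}_{i=0}^K$ satisfies
\begin{equation*}
    \ll f - \ol{f}^K \rl^2_\pi \leq \frac{D}{(K+1)^{2n}}\ll f \rl^2_{H^n_\pi}.
\end{equation*}
This is the standard decay of the expansion tail in terms of the Sobolev regularity of $f$ in the random variable; the exponent $2n$ reflects that each of the $n$ derivatives gains a factor $(K+1)^{-1}$, and squaring the $L^2_\pi$ norm doubles the exponent.

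I would apply this estimate with $f = \tp(t,\cdot)$, whose Sobolev regularity in $z$ is already controlled by Theorem \ref{thm: sensitivity}. This is the only place where the time dynamics enter: everything about the evolution is packaged into the bound on $\ll \tp(t) \rl^2_{H^n_\pi}$. Substituting the conclusion of Theorem \ref{thm: sensitivity},
\begin{equation*}
    \ll \ppk(t) \rl^2_\pi = \ll \tp(t) - \bpk(t) \rl^2_\pi \leq \frac{D}{(K+1)^{2n}}\ll \tp(t) \rl^2_{H^n_\pi} \leq \frac{D}{(K+1)^{2n}}\cdot\frac{\(5\nu^n n!\)^2}{a}\EHn(0)e^{-at}.
\end{equation*}
Absorbing the universal factor $5^2$ into the constant $D$ gives the stated bound for $\ppk$. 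The estimate for $\pmk$ follows identically, applying the spectral estimate to $f = \tm(t,\cdot)$ and using the corresponding bound $\ll \tm(t) \rl^2_{H^n_\pi}\leq \frac{(5\nu^n n!)^2}{b}\EHn(0)e^{-bt}$ from Theorem \ref{thm: sensitivity}, which produces the factor $1/b$ and the decay $e^{-bt}$.
\end{proof}

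**What I expect to be routine versus delicate.** The only genuine mathematical content is the spectral approximation inequality in the first display, and in a self-contained treatment this would need either a citation or a short lemma; the paper signals this by introducing $D$ as a constant "related to the measure $\pi(z)dz$," which strongly suggests it is quoting a known result rather than reproving it. Given that Theorem \ref{thm: sensitivity} is assumed, there is no real obstacle here — the corollary is a direct consequence, and the main thing to verify is merely that the constants and the two decay rates $e^{-at}, e^{-bt}$ are tracked correctly through the two norms. The reason this corollary is worth stating separately is that it isolates the projection component of the gPC-SG error \eqref{eq: error_p}--\eqref{eq: error_m}, which will be combined later with a separate stability bound on the Galerkin error $\epk$ to conclude the overall spectral convergence.
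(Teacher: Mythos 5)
Your proposal is correct and follows exactly the paper's own argument: cite the classical orthogonal-polynomial approximation theorem (the paper points to Theorem 2.1 of Canuto--Quarteroni) to get $\ll \tp - \bpk \rl^2_\pi \leq D(K+1)^{-2n}\ll \tp \rl^2_{H^n_\pi}$, then substitute the decay bounds from Theorem \ref{thm: sensitivity}. Your remark that the factor $5^2$ must be absorbed into $D$ is a correct reading of the discrepancy between the theorem's constant $(5\nu^n n!)^2$ and the corollary's $(\nu^n n!)^2$.
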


\begin{proof}
 (\ref{def of D}) comes from the classical approximation theorem of orthogonal basis, one can refer to Theorem 2.1 in \cite{canuto1982approximation}. For $\tp \in H^n_z$, there exists a constant $D$, such that 
\begin{align}
    \ll \tp - \bpk \rl_\pi^2 \leq D\frac{\ll \tp \rl^2_{H^n_z}}{(K+1)^{2n}},
\end{align}
then applying the result of Theorem \ref{thm: sensitivity} directly gives (\ref{def of D}).
\end{proof}

Since by Corollary \ref{coro}, we already have estimates for the projection error $(\ppk, \pmk)$, so in order to study the convergence rate of the gPC-SG method, we only need to analyze the Galerkin error $(\epk, \emk)$.  Estimates for $(\epk, \emk)$ are based on the stability of the gPC-SG method, which is stated in Theorem \ref{thm: stability}. Similar to the analysis we did to get the estimates for $\ll \tp \rl_{H^n_\pi}, \ll \tm \rl_{H^n_\pi}$, if one directly does the energy estimates on $\ll \Hpk \rl_\pi$, $\ll \Hmk \rl_\pi$, one will end up with a strong assumption on the initial data for large $K$. In order to avoid that, we add a weight $\mu_i$ to $\hp_i, \hm_i$, then under Assumptions \ref{cond: Phi} and \ref{cond: stab rinf}, we can get a stability result that is sharp in $K$. 
\begin{condition}
\label{cond: Phi}
     There exists a positive integer $p$, such that the basis functions $\{\Phi_i(z)\}_{i\geq0}$ satisfy,
    \begin{align}
    \label{def of p}
        \ll \Phi_i(z) \rl_{L^\infty} \leq \ta_i = (i+1)^p, \qd\text{for all } i\geq 0.
    \end{align}
\end{condition}
\begin{remark}
    This assumption, first introduced in \cite{Jintwophase}, combined with the weight $\u_i$ defined in (\ref{def of u}) guarantees that the initial data do not depend on $K$. For example, the bases of normalized Legendre polynomials, which corresponds to uniform distribution in $[-1, 1]$, satisfy the above condition with $p = 1/2$; The bases of normalized Chebyshev polynomials, which corresponds to the random variable with pdf $\pi(z) = \frac{2}{\pi\sqrt{1-z^2}}$, satisfy this condition with $p=0$. 
\end{remark}

\begin{condition}
\label{cond: stab rinf}
Let  $\rinf_i = \la \rinf, \Phi_i\ra_\pi$, we assume
\begin{equation}
\label{eq: stab rinf}
\sum_{j \geq 1}((j+1)^q\rinf_j)^2 \leq \frac{\(\rinf_0\)^2}{2^{2q+3}A},
\end{equation}
where the constant $A$ is defined in (\ref{def of A}), $q = p+2$, with $p$ defined in (\ref{def of p}); $\rinf_0 = \int_{I_z} \rinf \pi(z)dz$ is the expectation of $\rinf$.
\end{condition}
\begin{remark}
One sufficient condition for $\rinf$ is, 
\begin{equation}
\lv \rinf_j \rv^2 \leq \frac{C}{(j+1)^{2q+2}}, \qd \text{for } \forall j\geq 1, \qd C = \frac{\(\rinf_0\)^2}{2^{2q+4}AC_S}.
\end{equation}
This implies that the variance of $\rinf$, which is equal to $\sum_{j\geq 1}\(\rinf_j\)^2$, has to be small enough. 
\end{remark}

We further define $\Hpk_\mu, \Hmk_\mu$ as weighted approximate solution 
\begin{align}
    \Hpk_\u = \(\u_0\hp_0, \cdots, \u_K\hp_K\), \qd \Hmk_\u = \(\u_0\hm_0, \cdots, \u_K\hm_K\)
\end{align}
where $\u_i$ are weights defined as,
\begin{equation}
\label{def of u}
    \u_i = (i+1)^q, \qd\text{for}\qd q = p+2,
\end{equation}
and here $p$ is the positive constant defined in (\ref{def of p}).

\begin{theorem}
\label{thm: stability}
(Stability of the gPC-SG method) Under Assumptions \ref{cond: Phi} and \ref{cond: stab rinf}, for the approximate perturbative solution $(\Hpk, \Hmk)$ obtained by the gPC-SG method, if initially 
\begin{equation}
\label{eq: stab_cond}
\begin{aligned}
    \ll \Hmk_\u(0) \rl^2 \leq a^2\h{C}_0, \qd \ll \Hpk_\u(0) \rl^2 \leq b^2\h{C}_0,
\end{aligned}
\end{equation}
then it decays in time as follows
\begin{equation}
\begin{aligned}
    \ll \Hpk_\u(t)\rl^2\leq\frac{1}{a}\Ek(0)e^{-at} ,\qd 
\ll \Hmk_\u(t)\rl^2\leq\frac{1}{b}\Ek(0) e^{-bt},
\end{aligned}
\end{equation}
where $\Ek =a\ll \Hpk_\u(t)\rl^2 + b\ll \Hmk_\u(t)\rl^2$. Here $\h{C}_0 = (2^{2q+6}c^2A)^{-1}$ and  $A$ is defined in (\ref{def of A}). 
\end{theorem}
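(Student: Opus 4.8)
The plan is to transplant the weighted energy method of Theorem \ref{thm: sensitivity} from the random space to the deterministic Galerkin system \eqref{eq: determ}, with the algebraic weight $\u_i=(i+1)^q$, $q=p+2$, playing the role that $\o^*_i$ played before, and with the triple-product tensor $S^l_{ij}$ of \eqref{def E} replacing the Leibniz convolution. First I would form the weighted energy $\Ek=a\ll\Hpk_\u\rl^2+b\ll\Hmk_\u\rl^2$ and differentiate it along \eqref{eq: determ}: multiplying the $l$-th components of the two equations by $a\u_l^2\hp_l$ and $b\u_l^2\hm_l$, summing over $0\le l\le K$, and writing $\hht_l=a\hp_l+b\hm_l$ and $N^l=\sum_{i,j}\hm_i S^l_{ij}\hp_j$ (the $l$-th coefficient of $\hpk\hmk$), I would obtain an identity of exactly the shape of \eqref{eq: pt_zl_1},
\begin{equation*}
\tfrac12\pt_t\Ek=-a^2\ll\Hpk_\u\rl^2-b^2\ll\Hmk_\u\rl^2-c\sum_{l=0}^K\u_l^2\hht_l\sum_{m=0}^K\Up_{lm}\hht_m-c\sum_{l=0}^K\u_l^2 N^l\hht_l .
\end{equation*}
The two remaining terms are the discrete analogues of the linear ($\Up$) and nonlinear ($N^l$) contributions treated in Lemma \ref{lemma: sensitivity nonlinear}.

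The crux is a discrete counterpart of Lemma \ref{lemma: sensitivity nonlinear}, asserting for all $\g>0$ the nonlinear bound $\sum_{l=0}^K\u_l^2 N^l\hp_l\le\frac{C_\ast A}{\g}\ll\Hpk_\u\rl^2\ll\Hmk_\u\rl^2+\g\ll\Hpk_\u\rl^2$ (with the symmetric statement with $\hp_l$ replaced by $\hm_l$, and $C_\ast$ collecting a power of $2$ in $q$), together with the linear bound $-\sum_{l}\u_l^2\hht_l\sum_m\Up_{lm}\hht_m\le0$. Granting these, choosing $\g$ proportional to $c$ reduces the identity above to the same differential inequality as \eqref{eq: pt_zl_2}, namely $\tfrac12\pt_t\Ek\le-(\tfrac{3a}{4}-\tfrac{C}{a}\ll\Hmk_\u\rl^2)a\ll\Hpk_\u\rl^2-(\tfrac{3b}{4}-\tfrac{C}{b}\ll\Hpk_\u\rl^2)b\ll\Hmk_\u\rl^2$; the exponential decay then follows verbatim from the continuity/Gronwall argument of Lemma \ref{lemma: est_0}, the threshold $\h C_0=(2^{2q+6}c^2A)^{-1}$ in \eqref{eq: stab_cond} being precisely what keeps the bracketed coefficients below $-a/2$ and $-b/2$ along the flow.

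To prove the nonlinear bound I would rewrite each monomial of $\sum_l\u_l^2 N^l\hp_l$ in the weighted variables $\u_i\hm_i,\u_j\hp_j,\u_l\hp_l$, so that the decisive scalar is $\u_l|S^l_{ij}|/(\u_i\u_j)$. Here I would use two structural facts about $S^l_{ij}=\int\Phi_i\Phi_j\Phi_l\,\pi\,dz$: it vanishes unless $l\le i+j$ (a degree count, since $\Phi_i\Phi_j$ has degree $i+j$), and $|S^l_{ij}|\le\min(\ta_i,\ta_j,\ta_l)$ by Assumption \ref{cond: Phi} and orthonormality \eqref{eq: orthonormal}. Bounding $S^l_{ij}$ by the $\ta$ of the smaller of $i,j$ and using $(l+1)^q\le((i+1)+(j+1))^q\le 2^q\max(i+1,j+1)^q$ to cancel the larger weight leaves, since $q=p+2$, a factor $\le 2^q(\min(i,j)+1)^{-2}$; summing this through $\sum_{i\ge0}(i+1)^{-2}=A$ and then applying Young's and Cauchy--Schwarz inequalities exactly as in \eqref{nonlinear sens_2} yields the claimed estimate. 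For the linear bound I would split $\Up=\rinf_0 I+\t\Up$, where $\rinf_0=\la\rinf,\Phi_0\ra_\pi$ is the mean and $\t\Up_{lm}=\sum_{k\ge1}\rinf_k S^k_{lm}$ is the fluctuation; the diagonal part supplies the strictly negative $-\rinf_0\sum_l(\u_l\hht_l)^2$, while the fluctuation is controlled by the same index juggling and then absorbed into this negative term, now using the decay of $\rinf_k$ quantified by Assumption \ref{cond: stab rinf}, in complete analogy with how \eqref{nonlinear sens_3}--\eqref{eqn: I} absorbed the derivative corrections of $\rinf$. The role of the factor $r^2/(16A\k^2)$ hidden in \eqref{def of L} is now played by the threshold $(\rinf_0)^2/(2^{2q+3}A)$ in \eqref{eq: stab rinf}, which is exactly what makes this absorption possible.

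I expect the main obstacle to be the nonlinear estimate. Unlike the Leibniz rule, which ties the second factor to $l-i$ and hence produces a genuine single-index convolution, $S^l_{ij}$ is a full two-index kernel, so the summation must be organised carefully---splitting according to which of $i,j$ is larger, inserting the support constraint $l\le i+j$, and tracking the powers of $2$ and the constant $A$---in order to land on the stated $\h C_0$ with no $K$-dependence. Once the two bounds of the key lemma are secured, the remainder of the argument is identical to the proofs of Lemma \ref{lemma: est_0} and Theorem \ref{thm: sensitivity}.
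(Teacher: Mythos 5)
Your proposal is correct and follows essentially the same route as the paper: the same weighted energy $\Ek$ with weights $\u_l=(l+1)^q$, the same key lemma (the paper's Lemma \ref{lemma: mSp}) splitting into a nonlinear bound via $|S^l_{ij}|\le\ta_{\min\{i,j,l\}}$ together with the support constraint on $S^l_{ij}$ and the weight cancellation $(\min(i,j)+1)^{p-q}=(\min(i,j)+1)^{-2}$, and the same mean-plus-fluctuation decomposition of $\Up$ absorbed via Assumption \ref{cond: stab rinf}, followed by the continuity/Gronwall argument of Lemma \ref{lemma: est_0}. No substantive differences from the paper's proof.
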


The above theorem will be proved in Section \ref{sec: stability}. It tells us that the gPC-SG method is stable under some smallness assumption on the initial data. Based on the above result, we can prove the spectral accuracy of the gPC-SG method, which is stated in Theorem \ref{thm: spectral}. Before we state the theorem, we first introduce the Sobolev constant $C_S$,
\begin{equation}
    \label{def of C_S}
    \ll \tp\rl^2_{L^\infty_\bz} \leq C_S\ll \tp \rl^2_{H^1_\bz}, \qd \text{for }\forall \tp \in H^1_\bz.
\end{equation}

\begin{theorem}
\label{thm: spectral}
(Spectral accuracy of the gPC-SG method) Under Assumptions \ref{cond: rinf}, \ref{cond: Phi}, \ref{cond: stab rinf}, and in addition, initially the exact solution $(\tp, \tm)\in H^n_\pi$, and the approximate solution $(\hpk, \hmk)$ satisfies, 
\begin{equation*}
    \ll \tm(0) \rl_{H^n_\pi}^2 \leq a^2C_0, \qd \ll \tp(0) \rl_{H^n_\pi}^2 \leq b^2C_0,\qd \ll \Hmk_\u(0) \rl^2 \leq a^2\h{C}_0, \qd \ll \Hpk_\u(0) \rl^2 \leq b^2\h{C}_0,
\end{equation*}
then $(\hpk, \hmk)$ converges to $(\tp, \tm)$ according to,
\begin{equation*}
\begin{aligned}
    &\ll \tp - \hpk \rl_\pi^2 \leq \frac{C(n)}{a(K+1)^{2n}}e^{-at},\qd
    \ll \tm - \hmk \rl_\pi^2 \leq \frac{C(n)}{b(K+1)^{2n}}e^{-bt},
\end{aligned}
\end{equation*}
where $C_0 = \frac{b}{a+b}(5^22^6\nu^2c^2AC_S)^{-1}, \h{C}_0 = \frac{a}{a+b}\(2^{2q+6}c^2A\)^{-1}$, $C(n) = D\(1+I_0\)\nu^{2n}(n!)^2\EHn(0)$, $I_0 = \(32c^2R^2 +1 \)$, $A, R, D, C_S$ are constants defined in (\ref{def of A}), (\ref{def of r, R}),  (\ref{def of D}), (\ref{def of C_S}) respectively and $\nu$ is the same constant as in Theorem \ref{thm: sensitivity}.
\end{theorem}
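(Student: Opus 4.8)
The plan is to start from the error splitting $(\ref{eq: error_p})$--$(\ref{eq: error_m})$ into the projection error $(\ppk,\pmk)$ and the Galerkin error $(\epk,\emk)$. Corollary $\ref{coro}$ already bounds the projection error, so by the triangle inequality $\ll\tp-\hpk\rl_\pi^2\le 2\ll\ppk\rl_\pi^2+2\ll\epk\rl_\pi^2$ (and likewise for $\tm$) the whole task reduces to showing that the Galerkin error decays like $(K+1)^{-2n}e^{-at}$ (resp.\ $e^{-bt}$). To reuse the machinery already built, I would run a weighted energy estimate on the coefficient vectors $(\Bpk,\Bmk)$ using exactly the weights $\u_i=(i+1)^{q}$ of Theorem $\ref{thm: stability}$, treating the projection error as an external, exponentially small forcing.

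First I would derive the evolution of $(\epk,\emk)$. Applying the Galerkin projection $\mP^K$ to $(\ref{eq: perturb})$ and subtracting the scheme $(\ref{eq: approx})$, then using $\mP^K\ppk=0$, $\mP^K\epk=\epk$, and the identity $\tp\tm-\hpk\hmk=\tm(\tp-\hpk)+\hpk(\tm-\hmk)$ together with $\tp-\hpk=\ppk+\epk$, $\tm-\hmk=\pmk+\emk$, yields the schematic system
\begin{equation*}
\begin{aligned}
\pt_t\epk = {}&-a\epk - \mP^K[ac\rinf(\ppk+\epk)] - \mP^K[bc\rinf(\pmk+\emk)] \\
&- c\,\mP^K[\tm(\ppk+\epk)+\hpk(\pmk+\emk)],
\end{aligned}
\end{equation*}
and symmetrically for $\emk$. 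This isolates three kinds of contributions: the linear $\rinf$-coupling in $(\epk,\emk)$, the nonlinear self-products $\tm\,\epk$ and $\hpk\,\emk$, and the forcing terms $\tm\,\ppk$, $\hpk\,\pmk$ built from the projection error.

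Next I would form the weighted energy $\Eek=a\sum_{i=0}^{K}\u_i^2(\Bpk)_i^2+b\sum_{i=0}^{K}\u_i^2(\Bmk)_i^2$ and differentiate it. The linear $\rinf$-coupling is treated exactly as in the proof of Theorem $\ref{thm: stability}$: under Assumption $\ref{cond: stab rinf}$ it contributes a nonpositive term and may be dropped. For the nonlinear self-terms I would bound their coefficients by $\ll\tm\rl_{L^\infty}$ and $\ll\hpk\rl_{L^\infty}$. Here the choice $\u_i=(i+1)^{p+2}$ together with Assumption $\ref{cond: Phi}$ is decisive: since $\ta_i/\u_i=(i+1)^{-2}$ is square-summable with sum at most $A$, Cauchy--Schwarz gives $\ll\hpk\rl_{L^\infty}\le\sqrt{A}\,\ll\Hpk_\u\rl$, which stays bounded and decays by Theorem $\ref{thm: stability}$; and $\ll\tm\rl_{L^\infty}\le\sqrt{C_S}\,\ll\tm\rl_{H^1_\pi}$ by $(\ref{def of C_S})$, which decays by Theorem $\ref{thm: sensitivity}$. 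The smallness of the initial data then keeps these coefficients below the dissipation threshold via the same continuity argument used before. The forcing terms I would split by Young's inequality into a small multiple of $\Eek$ plus a remainder controlled by $R^2(\ll\ppk\rl_\pi^2+\ll\pmk\rl_\pi^2)$, the factor $R^2$ from $(\ref{def of r, R})$ being the origin of the constant $I_0=32c^2R^2+1$.

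Finally I would assemble a differential inequality $\tfrac12\pt_t\Eek\le-\alpha\,\Eek+(\text{forcing})$ whose forcing, by Corollary $\ref{coro}$, decays like $\tfrac{D(\nu^{n}n!)^2\EHn(0)}{(K+1)^{2n}}e^{-at}$. Gr\"onwall's inequality then gives $\ll\epk\rl_\pi^2,\ll\emk\rl_\pi^2\lesssim\tfrac{C(n)}{(K+1)^{2n}}e^{-at}$, and combining with Corollary $\ref{coro}$ through the triangle inequality yields the stated estimate with $C(n)=D(1+I_0)\nu^{2n}(n!)^2\EHn(0)$. The hardest part will be the nonlinear self-coupling: it forces me to carry \emph{simultaneous} $L^\infty_z$ bounds on both the exact solution $\tm$ (through the Sobolev constant $C_S$) and the numerical solution $\hpk$ (through the weighted coefficient norm and Assumption $\ref{cond: Phi}$), and to check that the combined thresholds $C_0=\tfrac{b}{a+b}(5^22^6\nu^2c^2AC_S)^{-1}$ and $\h{C}_0=\tfrac{a}{a+b}(2^{2q+6}c^2A)^{-1}$ are exactly strong enough to keep the energy dissipative while remaining consistent with the hypotheses of both Theorems $\ref{thm: sensitivity}$ and $\ref{thm: stability}$.
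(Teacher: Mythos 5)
Your overall architecture matches the paper's proof: split the error into projection and Galerkin parts via (\ref{eq: error_p})--(\ref{eq: error_m}), control the projection part by Corollary \ref{coro}, run an energy estimate on the Galerkin error with the projection error entering as an exponentially decaying forcing, and close with Gr\"onwall. The two mechanisms you name for the nonlinear term --- the Sobolev bound $\ll\tm\rl_{L^\infty}\le \sqrt{C_S}\,\ll\tm\rl_{H^1_\pi}$ for the exact solution and the bound $\ll\hpk\rl_{L^\infty}\le\sqrt{A}\,\ll\Hpk_\u\rl$ for the numerical one --- are exactly the ones the paper uses (the latter in the guise of Lemma \ref{lemma: spectral ineq}).

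The genuine gap is your choice to put the weights $\u_i=(i+1)^q$ on the Galerkin error itself, i.e.\ to work with $\sum_i\u_i^2(\Bpk)_i^2$. The paper deliberately uses the \emph{unweighted} energy $\Eek=a\ll\Bpk\rl^2+b\ll\Bmk\rl^2$ and reserves the weights for the numerical solution $\Hmk_\u$ alone. The weighted error energy runs into trouble with the forcing terms built from the projection error: you must bound quantities such as $\sum_{l\le K}\u_l^2\la\rinf\ppk,\Phi_l\ra_\pi^2$ and $\sum_{l\le K}\u_l^2\la\hpk\pmk,\Phi_l\ra_\pi^2$, and since $\ppk,\pmk$ are supported on modes $j>K$ while the outer index $l$ only ranges up to $K$, the factor $\u_l^2$ can be as large as $(K+1)^{2q}$ and is not absorbed by the combinatorial estimates of Lemma \ref{lemma: mSp} (there the gain $\(\u_l/\u_j\)^2\le 2^{2q}$ relied on $l\le 2j$ with $j$ the index carrying the weight; here the forcing coefficients $\tm_j$, $j>K$, carry no weight). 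A crude bound then yields only $(K+1)^{2q}\ll\ppk\rl_\pi^2$, degrading the rate from $(K+1)^{-2n}$ to $(K+1)^{-2(n-q)}$. The same mismatch appears in the nonlinear self-term: $\sum_l\u_l^2\la\tm\epk,\Phi_l\ra_\pi(\Bpk)_l$ does not reduce to $\ll\tm\rl_{L^\infty}$ times the weighted error norm squared, because the inner product redistributes mass across modes with different weights. The fix is simply to drop the weights from the error energy; then the $\rinf$-coupling in $(\epk,\emk)$ is nonpositive by positive definiteness of $\Up$ (Proposition \ref{coro: pos def}; Assumption \ref{cond: stab rinf} is not needed for that step), the forcing is bounded by $R^2\(\ll\ppk\rl_\pi^2+\ll\pmk\rl_\pi^2\)$, and your $L^\infty$ bounds close the estimate exactly as in the paper. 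One last detail: $\ppk$ and $\epk$ are orthogonal in $L^2_\pi$, so $\ll\tp-\hpk\rl_\pi^2=\ll\ppk\rl_\pi^2+\ll\epk\rl_\pi^2$; the factor of $2$ from your triangle inequality is unnecessary and would spoil the stated constant $C(n)$.
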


\section{Proof of Theorem \ref{thm: stability} (Stability of the gPC-SG method)}
\label{sec: stability}
In this section, we will study the stability of the gPC-SG method for this model. We will use energy estimates to analyze $\Ek =a\ll \Hpk_\u(t)\rl^2 + b\ll \Hmk_\u(t)\rl^2 $. Similar to the proof in the sensitivity analysis in Section \ref{sec: sensitivity}, the most important part in the proof is how to bound the nonlinear term and the linear term with coefficient $\rinf(z)$ properly. We use the weight $\u_i$ to make the upper bound of this two terms independent of $K$, and it is stated in Lemma \ref{lemma: mSp}, which will be proved in Appendices \ref{proof: mSp}.   

By multiplying $a(\Hpk)^\top U^2$ and $b(\Hmk)^\top U^2$ with $U = \text{diag}(\u_0, \cdots, \u_K)$ to the two systems in (\ref{eq: determ}) respectively, one has,
\begin{equation}
\begin{aligned}
\frac{1}{2}\pt_t\Ek
\leq& - a^2\ll \Hpk_\u \rl - b^2\ll \Hmk_\u \rl -c\(\Htk\)^\top U^2 \Up \Htk  -c\sum_{l=0}^n \sum_{i,j} \u_l^2\hp_i S^l_{ij} \hm_j\hht_l,
\end{aligned}
\end{equation}
where $\hht = a\hp_l+b\hm_l$ and $\Htk = \(\hht_0, \cdots, \hht_K\)$. In the following Lemma \ref{lemma: mSp}, by (\ref{stab rinf}) one can omit the third term on the RHS of the above equation; by (\ref{stab nonlinear}), and setting $\gamma = \frac{1}{2c}$, one can bound the last term by,
\begin{equation}
\begin{aligned}
\frac{1}{2}\pt_t\Ek\leq & - a^2\ll \Hpk_\u \rl^2 - b^2\ll \Hmk_\u \rl^2
+ 2^{2q+5}c^2A \ll\Hmk_\u\rl \ll \Hpk_\u \rl^2  + \frac{a^2}{4}\ll \Hpk_\u \rl^2 + \frac{b^2}{4}\ll \Hmk_\u \rl^2\\
\leq & - \(\frac{3a}{4} -\frac{c^22^{2q+4}A}{a}\ll\Hmk_\u\rl^2  \)a\ll \Hpk_\u \rl_{H^1_x}^2 - \(\frac{3b}{4}  -\frac{c^22^{2q+4}A}{b}\ll\Hpk_\u\rl^2  \)b\ll \Hmk_\u \rl^2.
\end{aligned}
\end{equation}
Since the above inequality is similar to (\ref{eq: pt_zl_2}), by the continuity theorem, one gets similar result for $\ll \Hpk_\u\rl^2,\ll \Hmk_\u \rl$, which completes the proof.

\begin{lemma}
\label{lemma: mSp}
For $S^l$ defined in (\ref{def E}), the following inequality holds, 
\begin{equation}
\label{stab nonlinear}
    \sum_{l=0}^n \sum_{i,j} \u_l^2\hp_i S^l_{ij} \hm_j\hp_l \leq \frac{2^{2q+3}A}{\g}  \ll\Hmk_\u\rl \ll \Hpk_\u \rl^2 + \frac{\g}{2}\ll \Hpk_\u \rl^2,
\end{equation}
where  $q, A$  are constants defined in (\ref{def of u}), (\ref{def of A}).\\
For $\Up$ defined in (\ref{def E}), under Assumption \ref{cond: stab rinf}, the following inequality holds 
\begin{equation}
\label{stab rinf}
    -\(\Htk\)^\top U^2 \Up\Htk \leq 0.
\end{equation}
\end{lemma}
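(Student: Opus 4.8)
The plan is to follow the blueprint of Lemma \ref{lemma: sensitivity nonlinear}, with the triple‑product tensor $S^l_{ij}$ playing the role of the binomial coefficients and the weight $\u_i=(i+1)^q$ ($q=p+2$) playing the role of $\o^*_i$. For both inequalities I first pass to the weighted coefficients $\u_i\hp_i,\u_j\hm_j,\u_l\hht_l$, and I use three structural facts about $S^l_{ij}=\int\Phi_i\Phi_j\Phi_l\,\pi(z)dz$: it is symmetric in $i,j,l$; it vanishes unless $l\le i+j$ (a degree count on the polynomial product); and, by Condition \ref{cond: Phi}, Cauchy--Schwarz and orthonormality, $|S^l_{ij}|\le\min(\ta_i,\ta_j,\ta_l)$ with $\ta_i=(i+1)^p$. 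Condition \ref{cond: Phi} also yields the embedding $\ll\hmk\rl_{L^\infty}\le\sum_j(j+1)^p|\hm_j|=\sum_j(j+1)^{-2}\,(\u_j|\hm_j|)\le\sqrt{A}\,\ll\Hmk_\u\rl$, where the surplus two powers $(j+1)^{-2}$ make the series converge to $A$ (see \eqref{def of A}); this is exactly what the choice $q=p+2$ is for.

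For the nonlinear bound \eqref{stab nonlinear} I would first apply Young's inequality in the index $l$ to peel off one factor,
\begin{equation*}
\sum_{l}\u_l^2\hp_l\sum_{i,j}\hp_i S^l_{ij}\hm_j \le \frac{\g}{2}\ll\Hpk_\u\rl^2+\frac{1}{2\g}\sum_l\u_l^2\(\sum_{i,j}\hp_i S^l_{ij}\hm_j\)^2,
\end{equation*}
which produces the term $\frac{\g}{2}\ll\Hpk_\u\rl^2$ directly. Writing $P_l=\sum_{ij}\hp_i S^l_{ij}\hm_j=\la\hpk\hmk,\Phi_l\ra_\pi$, the remaining quantity $\sum_l\u_l^2P_l^2$ is a weighted gPC norm of the product $\hpk\hmk$, and the task is a discrete Moser/Leibniz estimate. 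On the support $l\le i+j$ I would split the weight by $(l+1)^q\le 2^{q-1}\((i+1)^q+(j+1)^q\)$, which moves $\u_l$ onto either the $i$‑ or the $j$‑slot and reproduces, up to a controllable commutator, the two Moser contributions $\ll\Hpk_\u\rl\,\ll\hmk\rl_{L^\infty}$ and $\ll\hpk\rl_{L^\infty}\,\ll\Hmk_\u\rl$. Using the $L^\infty$ embedding above and collecting the factors of $2$ and the series constant $A$ then gives $\sum_l\u_l^2P_l^2\le 2^{2q+2}A\,\ll\Hpk_\u\rl^2\ll\Hmk_\u\rl^2$, and hence the stated bound with constant $2^{2q+3}A/\g$.

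For the linear bound \eqref{stab rinf} I would exploit that $\Phi_0\equiv1$, so $S^0_{ij}=\d_{ij}$, and split $\Up=\rinf_0 I+\Up'$ with $\Up'_{ij}=\sum_{k\ge1}\rinf_k S^k_{ij}$ and $\rinf_0=\int_{I_z}\rinf\,\pi(z)dz>0$. The diagonal part contributes the manifestly nonnegative $\rinf_0\,\ll U\Htk\rl^2$ with $U=\text{diag}(\u_0,\dots,\u_K)$, so it suffices to dominate the remainder by it. Treating $(\rinf_k)_{k\ge1}$ as the ``small'' sequence and $\Htk$ as the solution sequence, the very same tensor estimate as above gives
\begin{equation*}
\bigl|(\Htk)^\top U^2\Up'\Htk\bigr|\le\(2^{2q+3}A\sum_{k\ge1}((k+1)^q\rinf_k)^2\)^{1/2}\ll U\Htk\rl^2,
\end{equation*}
and Condition \ref{cond: stab rinf} is precisely the smallness $\sum_{k\ge1}((k+1)^q\rinf_k)^2\le\rinf_0^2/(2^{2q+3}A)$ needed to make the right‑hand side $\le\rinf_0\,\ll U\Htk\rl^2$, whence $(\Htk)^\top U^2\Up\Htk\ge0$, which is \eqref{stab rinf}.

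The main obstacle is the weighted product estimate for $S^l_{ij}$. Unlike the convolution $\sum_i\bn{l}{i}(\cdot)_i(\cdot)_{l-i}$ of the sensitivity analysis, where summing over $l$ frees one sequence exactly, $S^l_{ij}$ is a genuine three‑index tensor, so a naive Cauchy--Schwarz that discards $S^l_{ij}$ too early loses the coupling between $l$ and $(i,j)$ and reintroduces a factor depending on $K$. The delicate point is to retain this coupling through Parseval's identity $\sum_l(S^l_{ij})^2=\ll\Phi_i\Phi_j\rl_\pi^2\le\min(\ta_i^2,\ta_j^2)$ (equivalently, to control the Leibniz commutator) long enough to perform the $l$‑summation, using the pointwise bound on $S^l_{ij}$ only to generate the summable weights. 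Getting both the constant $A$ and the power $2^{2q+3}$ to come out independent of $K$ is the crux, and it is exactly here that the degree condition $l\le i+j$ and the choice $q=p+2$ are indispensable.
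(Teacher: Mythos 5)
Your overall architecture coincides with the paper's: Young's inequality in $l$ to peel off $\hp_l$ and produce the $\frac{\g}{2}\ll \Hpk_\u\rl^2$ term; the bound $|S^l_{ij}|\le\ta_{\min\{i,j,l\}}\chi_{ijl}$ obtained from orthonormality and Condition \ref{cond: Phi}; the extra two powers in $\u_i/\ta_i=(i+1)^2$ generating the summable constant $A$; and, for \eqref{stab rinf}, the decomposition $\Up=\rinf_0 I+\sum_{k\ge1}\rinf_kS^k$ with the off-mean part absorbed by the same bilinear estimate under Assumption \ref{cond: stab rinf} --- this last part is exactly the paper's argument in Appendix B. The gap is in the central step of \eqref{stab nonlinear}. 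Once you redistribute the weight via $(l+1)^q\le 2^{q-1}\((i+1)^q+(j+1)^q\)$ you are forced to take absolute values, so the quantity left to control is $\sum_l\(\sum_{i,j}\u_i|\hp_i|\,|\hm_j|\,|S^l_{ij}|\)^2$ --- with $|S^l_{ij}|$, not $S^l_{ij}$. This is no longer the sequence of Fourier coefficients of any product of functions, so neither Parseval's identity $\sum_l(S^l_{ij})^2=\ll\Phi_i\Phi_j\rl_\pi^2$ nor the Moser bound $\ll fg\rl_\pi\le\ll f\rl_\pi\ll g\rl_{L^\infty}$ applies to it; the ``controllable commutator'' you invoke is precisely the object that never gets controlled, and your own closing paragraph concedes that this is the crux without supplying the mechanism. (A symptom: your route would yield a constant of order $2^{2q}A/\g$ rather than $2^{2q+3}A/\g$, i.e.\ it is ``too good'', because it silently identifies the signed and unsigned sums.)

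What the paper does instead is purely combinatorial and never appeals to products of functions: split $\sum_{i,j}$ into $i\le j$ and $i>j$; on the first piece write $\ta_i\chi_{ijl}=\frac{\u_i}{(i+1)^2}\chi_{ijl}$ and apply Cauchy--Schwarz twice, once in $i$ against the weight $(i+1)^{-2}$ (producing $A$) and once in $j$ against $\chi_{ijl}/\u_j^2$; then use the two counting bounds $\sum_{j\ge i}\(\u_l/\u_j\)^2\chi_{ijl}\le 2^{2q}(2i+1)$ (since $l\le i+j\le 2j$ on the support, so $\u_l\le 2^q\u_j$) and $\sum_l\chi_{ijl}\le 2i+1$, whose combined factor $(2i+1)^2/(i+1)^2\le 4$ supplies the remaining powers of $2$. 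If you replace your Parseval/$L^\infty$ step by this double Cauchy--Schwarz with support counting, your proof closes and then coincides with the paper's; the treatment of \eqref{stab rinf} needs no further change.
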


\begin{proof}
Similar proof of (\ref{stab nonlinear}) can be found in \cite{Zhu2017BE}, and based on (\ref{stab nonlinear}) and Assumption \ref{cond: stab rinf}, one can easily get (\ref{stab rinf}).  Therefore, we put the details of the proof in Appendix \ref{proof: mSp}. 
\end{proof}


\section{Proof of Theorem \ref{thm: spectral} (Spectral accuracy of the gPC-SG method)}
\label{sec: spectral}
In this section, we will prove the spectral accuracy of the gPC-SG method based on Theorems \ref{thm: sensitivity} and \ref{thm: stability}. We will use energy estimates to analyze $\Eek$,
\begin{equation}
    \label{def of Eek}
    \Eek =  a\ll \Bpk \rl^2 + b\ll \Bmk \rl^2.
\end{equation}
Project (\ref{eq: perturb}) onto the truncated subspace $\{ \bPk\}$, and then subtract the approximate perturbative system (\ref{eq: approx}) from it, one has the following system for $(\Bpk, \Bmk)$,
\begin{empheq}[left=\empheqlbrace]{align}
    \pt_t\Bpk = &-a\Bpk - ac\Up\Bpk - bc\Up\Bmk - c\int \(a\rinf\ppk + b\rinf \pmk\)\bPk d\pi(z) \nonumber\\
    &- c\int\(\tp\tm - \hpk\hmk\)\bPk d\pi(z) ,\label{eq: Bpk}\\
    \pt_t \Bmk =& -b\Bmk -bc\Up\Bmk -ac\Up\Bpk - c\int \( b\rinf\pmk +a\rinf\ppk\)\bPk d\pi(z) \nonumber\\
     &-c\int\(\tp\tm - \hpk\hmk\)\bPk d\pi(z) \label{eq: Bmk}.
\end{empheq}
When one does energy estimates to the above system, the most difficult part lies in how to bound the last nonlinear term. We analyze this term in Lemmas \ref{lemma: spectral ineq} and \ref{lemma: non_spectral}. For other linear terms, notice that $r\leq \rinf(z) \leq R$ for all $z\in I_z$, so by Theorem 3.1 in \cite{xiu2009efficient}, $\Up$ has the following properties. One can also refer to Appendices for the proof.
\begin{proposition}
\label{coro: pos def}
For the steady state $\rinf$ with lower bound and upper bound as in (\ref{def of r, R}), the constant matrix $\Up$ defined in (\ref{def E}) is a positive definite matrix and for any vector $\ba$, 
\begin{equation*}
    r\ll \ba \rl^2 \leq \ba^\top \Up \ba \leq R\ll \ba \rl^2.
\end{equation*}
\end{proposition}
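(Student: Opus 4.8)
The plan is to recognize the quadratic form $\ba^\top \Up \ba$ as a weighted $L^2_\pi$ integral of a single polynomial and then exploit the pointwise bounds on $\rinf$ together with the orthonormality of the basis. Writing $\ba = (a_0, \ldots, a_K)^\top$ and expanding the definition \eqref{def E}, I would first establish the identity
\begin{equation*}
\ba^\top \Up \ba = \sum_{i,j=0}^K a_i a_j \int_{I_z} \rinf \Phi_i \Phi_j \,\pi(z)dz = \int_{I_z} \rinf(z)\(\sum_{i=0}^K a_i \Phi_i(z)\)^2 \pi(z)dz,
\end{equation*}
where interchanging the finite sum with the integral is immediate and the cross terms reassemble into a perfect square. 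Setting $f(z) = \sum_{i=0}^K a_i \Phi_i(z)$, the form is exactly $\int_{I_z} \rinf(z) f(z)^2 \pi(z)dz$.

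Next I would invoke the pointwise bounds $r \leq \rinf(z) \leq R$ from \eqref{def of r, R}. Since $f(z)^2 \geq 0$ everywhere on $I_z$, multiplying the inequalities by $f(z)^2$ and integrating against the probability measure $\pi(z)dz$ yields
\begin{equation*}
r\int_{I_z} f(z)^2 \pi(z)dz \leq \ba^\top \Up \ba \leq R\int_{I_z} f(z)^2 \pi(z)dz.
\end{equation*}
The final step evaluates the flanking integral via the orthonormality relation \eqref{eq: orthonormal}: expanding $f^2$ and applying $\int_{I_z} \Phi_i \Phi_j \,\pi(z)dz = \d_{ij}$ collapses the double sum to $\sum_{i=0}^K a_i^2 = \ll \ba \rl^2$. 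This delivers $r \ll \ba \rl^2 \leq \ba^\top \Up \ba \leq R \ll \ba \rl^2$ precisely. Positive definiteness then follows because $r > 0$, which holds since $\rinf$ is strictly positive by \eqref{eq: stationary} and hence its lower bound is strictly positive; consequently $\ba^\top \Up \ba \geq r \ll \ba \rl^2 > 0$ for every $\ba \neq 0$.

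There is no genuine obstacle here: the argument is entirely elementary once the bilinear form is viewed through the lens of the weighted inner product, and it reproduces in the present notation the computation packaged by Theorem 3.1 in \cite{xiu2009efficient}. The only points demanding minor care are ensuring that $r$ is \emph{strictly} positive so that the lower bound yields positive definiteness rather than mere semidefiniteness, and observing that the symmetry of $\Up$ already recorded in \eqref{def E} guarantees the quadratic form is real-valued, so that the derived bounds on the form are equivalent to the spectral bounds $r \leq \lambda_{\min}(\Up) \leq \lambda_{\max}(\Up) \leq R$ on its eigenvalues.
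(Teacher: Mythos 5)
Your proof is correct and follows essentially the same route as the paper's appendix argument: rewrite $\ba^\top \Up \ba$ as $\int_{I_z}\rinf(z)\left(\sum_i a_i\Phi_i(z)\right)^2\pi(z)dz$, apply the pointwise bounds on $\rinf$, and collapse the remaining integral to $\ll\ba\rl^2$ via orthonormality. Your version is in fact slightly more complete, since the paper's appendix only writes out the lower bound explicitly.
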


Therefore, if one does dot product of $a\Bpk, b\Bmk$ to the two equations respectively, then add them together, after applying the above Proposition, one has
\begin{equation}
\label{spectral est_1}
\begin{aligned}
    &\pt_t\Eek \\
     \leq &-a^2\ll \Bpk \rl^2-b^2\ll \Bmk \rl^2 - c\la \rinf \(a\ppk + b\pmk\), a\epk + b\emk\ra_\pi\\
    &\underbrace{- c\la\tp\tm - \hpk\hmk,a\epk+b\emk\ra_\pi}_{IV} \\
    \leq &-a^2\ll \Bpk \rl^2-b^2\ll \Bmk \rl^2 + c^2\ll \rinf \rl^2_{L^\infty_z} \(8a^2\ll \ppk \rl^2_\pi + 8b^2\ll \pmk\rl^2_\pi\) \\
    &+ \frac{a^2}{8}\ll \Bpk \rl^2 + \frac{b^2}{8}\ll \Bmk\rl^2 + \(16c^2C_S\ll \tp \rl^2_{H^1_z} + \frac{b^2}{8}\)\ll\Bmk\rl^2+\(64c^2A\ll \Hmk_\u \rl^2+\frac{a^2}{8}\)\ll \Bpk \rl^2 \\
    & + 16c^2C_S\ll \tp \rl^2_{H^1_z}\ll\pmk\rl_\pi^2+ 64c^2A\ll \Hmk_\u \rl^2\ll \ppk \rl^2_\pi\\
    = &-\(\frac{3a}{4} - \frac{64c^2A}{a}\ll \Hmk_\u \rl^2\)a\ll \Bpk \rl^2-\(\frac{3b}{4} - 16c^2C_S\ll \tp \rl^2_{H^1_z}\) b\ll \Bmk \rl^2\\
    &+\underbrace{\(8a^2c^2R^2+ 64c^2A\ll \Hmk_\u \rl^2\)\ll \ppk \rl^2_\pi + \(8b^2c^2R^2+ 16c^2C_S\ll \tp \rl^2_{H^1_z}\)\ll \pmk\rl^2_\pi}_{J(t)},
\end{aligned}
\end{equation}
where Young's inequality and Lemma \ref{lemma: non_spectral} are applied to the second inequality.
Based on inequality (\ref{spectral est_1}), if 
\begin{equation}
\label{spect init cond}
     64c^2A\ll \Hmk_\u(t) \rl^2\leq\frac{a^2}{4}, \qd  16c^2C_S\ll \tp(t) \rl_{H^1_z}^2 \leq \frac{b^2}{4},
\end{equation}
then one has
\begin{equation}
\label{eqn: spectral_proc}
\begin{aligned}
    \frac{1}{2}\pt_t\Eek \leq & -\frac{a^2}{2} \ll \Bpk \rl^2- \frac{b^2}{2}\ll \Bmk \rl^2 + J(t).
\end{aligned}
\end{equation}
If $J(t)$ can be bounded for $\forall t>0$, then one can have exponential decay of $\Eek$. But first, let us check when assumption (\ref{spect init cond}) is satisfied. By Theorems \ref{thm: stability} and \ref{thm: sensitivity}, one has,
\begin{equation*}
    \ll \Hmk_\mu \rl^2 \leq \frac{\Ek(0)}{b} \leq a(a+b)\h{C}_0, \qd \ll \tp \rl^2_{H^1_\pi} \leq \frac{(5\nu)^2 E_{H^1_\pi}(0)}{a} \leq b(a+b)5^2\nu^2 C_0.
\end{equation*}
Therefore, as long as 
\begin{equation}
    \label{eqn: spect cond_1}
    \begin{aligned}
        &\h{C}_0 \leq \min\l\{\frac{a}{(a+b)}\frac{1}{2^8c^2A}, \frac{1}{2^{2q+6}c^2A}\r\} \leq \frac{a}{a+b}\(2^{2q+6}c^2A\)^{-1}, \\
        &C_0 \leq \min\l\{\frac{b}{(a+b)}\frac{1}{5^22^6\nu^2c^2C_S}, \frac{1}{5^22^5Ac^2}\r\} \leq \frac{b}{a+b}(5^22^6\nu^2c^2AC_S)^{-1}
    \end{aligned}
\end{equation}
(\ref{eqn: spectral_proc}) is satisfied, and then the error $\Eek$ satisfies (\ref{eqn: spectral_proc}). Integrating (\ref{eqn: spectral_proc}) over $t$ gives, 
\begin{equation*}
\begin{aligned}
    a\ll \Bpk(t) \rl^2+ b\ll \Bmk(t) \rl^2 \leq & 2\int_0^tJ(s)ds -a^2 \ll \Bpk \rl^2-b^2 \ll \Bmk \rl^2,
\end{aligned}
\end{equation*}
where $\Eek(0) = 0$ is used. Then separate it into two parts, one has
\begin{equation}
\label{spectral: ineq}
\begin{aligned}
    \ll \Bpk(t) \rl^2 \leq & \frac{2}{a}\int_0^tJ(s)ds -a \ll \Bpk \rl^2, \qd \ll \Bmk(t) \rl^2 \leq & \frac{2}{b}\int_0^tJ(s)ds -b \ll \Bmk \rl^2.
\end{aligned}
\end{equation}

Now we need to bound the term $\int_0^tJ(s)ds $. Insert (\ref{eqn: spectral_proc}) and Corollary \ref{coro} into $J(t)$, 
\begin{equation*}
    J(t) \leq \(8a^2c^2R^2 +  \frac{a^2}{4}\)\frac{D\(\nu^{n}n!\)^2\EHn(0)}{a(K+1)^{2n}}e^{-at} + \(8b^2c^2R^2 + \frac{b^2}{4}\)\frac{D\(\nu^{n}n!\)^2\EHn(0)}{b(K+1)^{2n}}e^{-bt},
\end{equation*}
which implies that
\begin{equation*}
\begin{aligned}
    &2\int_0^t J(s) ds \\
    \leq& \frac{2}{a}\(8a^2c^2R^2 +  \frac{a^2}{4}\)\frac{D\(\nu^{n}n!\)^2\EHn(0)}{a(K+1)^{2n}} + \frac{2}{b}\(8b^2c^2R^2 + \frac{b^2}{4}\)\frac{D\(\nu^{n}n!\)^2\EHn(0)}{b(K+1)^{2n}}\\
    \leq& 2\(16c^2R^2 +  \frac{1}{2}\)\frac{D\(\nu^{n}n!\)^2\EHn(0)}{(K+1)^{2n}} \leq\frac{I_0D\(\nu^{n}n!\)^2\EHn(0)}{2(K+1)^{2n}} ,
\end{aligned}
\end{equation*}
where $I_0 =32c^2R^2 + 1$, so (\ref{spectral: ineq}) becomes
\begin{equation}
\begin{aligned}
    \ll \Bpk(t) \rl^2 \leq & \frac{I_0D\(\nu^{n}n!\)^2\EHn(0)}{a(K+1)^{2n}} -a \ll \Bpk \rl^2, \qd \ll \Bmk(t) \rl^2 \leq &\frac{I_0D\(\nu^{n}n!\)^2\EHn(0)}{b(K+1)^{2n}}-b \ll \Bmk \rl^2.
\end{aligned}
\end{equation}
Applying Grownwall's inequality gives,
\begin{equation}
\label{eqn: Bpk Bmk}
     \ll \Bpk \rl^2 \leq \frac{I_0D\(\nu^{n}n!\)^2\EHn(0)}{a(K+1)^{2n}}e^{-at},\qd \ll \Bmk \rl^2 \leq \frac{I_0D\(\nu^{n}n!\)^2\EHn(0)}{b(K+1)^{2n}}e^{-bt}.
\end{equation}
Therefore, By (\ref{eq: error_p}), 
\begin{equation*}
\begin{aligned}
    \ll \tp - \hpk \rl^2_\pi \leq \ll \ppk \rl^2_\pi + \ll \epk \rl^2_\pi
\end{aligned}
\end{equation*}
and inserting (\ref{def of D}), (\ref{eqn: Bpk Bmk}) gives, 
\begin{equation*}
\begin{aligned}
    \ll \tp - \hpk \rl^2_\pi \leq  D(1+I_0)\frac{\nu^{2n}(n!)^2\EHn(0)}{a(K+1)^{2n}}e^{-at}.
\end{aligned}
\end{equation*}
Similar inequality can be obtained for $\ll \tm - \hmk \rl^2_\pi$, which completes the proof of Theorem \ref{thm: spectral}.

\begin{lemma}
    \label{lemma: spectral ineq}
    For any function $$\tm(z) = \sum_{i=0}^\infty \tm_i\Phi_i(z),\qd \tp(z) = \sum_{i=0}^\infty \tp_i\Phi_i(z),$$  where $\tm_i = \int \tm \Phi d\pi(z)$, $\tp_i = \int \tp \Phi d\pi(z)$, the following inequality holds,
    \begin{equation*}
        \sum_{l=0}^K\(\int \tp\tm \Phi_l d\pi(z) \)^2 \leq 4A\sum_{i\geq0}\((i+1)^q \tp_i \)^2 \sum_{i\geq0} \tm_i^2,
    \end{equation*}
    where $q = p+2$ and $p,A$ are constants defined in (\ref{def of p}), (\ref{def of A}).
\end{lemma}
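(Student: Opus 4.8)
The plan is to read the left-hand side as a statement about Fourier coefficients and let orthogonality dispose of the summation over $l$. Observe that $\int \tp\tm\,\Phi_l\,d\pi(z)$ is precisely the $l$-th generalized Fourier coefficient of the product $\tp\tm$ with respect to the orthonormal system $\{\Phi_l\}$. First I would drop the truncation and invoke Bessel's inequality, which only needs the orthonormality (\ref{eq: orthonormal}) and not completeness:
\begin{equation*}
\sum_{l=0}^K\(\int \tp\tm\,\Phi_l\,d\pi(z)\)^2 \leq \sum_{l=0}^\infty\(\int \tp\tm\,\Phi_l\,d\pi(z)\)^2 \leq \ll \tp\tm\rl^2_{L^2_\pi}.
\end{equation*}
This collapses the whole problem into a single $L^2_\pi$ estimate on the product. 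I would then split it with an $L^\infty_z$--$L^2_\pi$ product estimate, $\ll \tp\tm\rl^2_{L^2_\pi} = \int \tp^2\tm^2\,d\pi(z) \leq \ll \tp\rl^2_{L^\infty_z}\,\ll \tm\rl^2_{L^2_\pi}$, and use $\ll \tm\rl^2_{L^2_\pi} = \sum_{i\geq 0}\tm_i^2$, again by orthonormality. All the weighted structure on the right-hand side must therefore come from the factor $\ll \tp\rl^2_{L^\infty_z}$.

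The heart of the argument is this $L^\infty_z$ bound on $\tp = \sum_{i\geq0}\tp_i\Phi_i$, and it is where Assumption \ref{cond: Phi} enters. From $\ll \Phi_i\rl_{L^\infty}\leq \ta_i=(i+1)^p$ and the triangle inequality, $\ll \tp\rl_{L^\infty_z}\leq \sum_{i\geq0}|\tp_i|(i+1)^p$. The decisive trick is to manufacture the weight by writing $(i+1)^p = (i+1)^q\cd(i+1)^{-2}$ with $q=p+2$ and applying Cauchy--Schwarz,
\begin{equation*}
\ll \tp\rl_{L^\infty_z} \leq \(\sum_{i\geq0}\((i+1)^q\tp_i\)^2\)^{1/2}\(\sum_{i\geq0}\frac{1}{(i+1)^4}\)^{1/2}.
\end{equation*}
Since $(i+1)^{-4}\leq(i+1)^{-2}$ termwise, the last factor is bounded by $\sum_{i\geq0}(i+1)^{-2}=A$ from (\ref{def of A}), giving $\ll \tp\rl^2_{L^\infty_z}\leq A\sum_{i\geq0}\((i+1)^q\tp_i\)^2$. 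This also explains the precise choice $q=p+2$: the extra two powers are exactly what is needed to leave a convergent $\sum(i+1)^{-2}$ series after pulling the weight inside. Combining the two displays yields the assertion, in fact with constant $A$, which is comfortably within the stated $4A$.

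The computation is short, so the only thing demanding care is a convergence caveat rather than a genuine obstacle: the $L^\infty_z$ bound is meaningful only when $\sum_{i}\((i+1)^q\tp_i\)^2<\infty$; but if this series diverges the claimed inequality is vacuously true, so one may assume it finite, in which case $\tp\in L^\infty_z$, $\tm\in L^2_\pi$, and every step above is legitimate. I would also point out that the direct coefficient route — expanding $\int\tp\tm\,\Phi_l\,d\pi(z)=\sum_{i,j}\tp_i\tm_j S^l_{ij}$ and using the tensor identity $\sum_{l\geq0}S^l_{ij}S^l_{i'j'}=\int\Phi_i\Phi_j\Phi_{i'}\Phi_{j'}\,d\pi(z)$ — leads back to the very same quantity $\int\tp^2\tm^2\,d\pi(z)$, confirming that the Bessel step is the essential mechanism and that any purely termwise bound on $S^l_{ij}$ (which would lose a spurious factor $K+1$) must be avoided. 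The looser factor $4A$ in the statement presumably reflects a more elementary Young/Cauchy--Schwarz splitting of the triple-product sum in the spirit of Lemma \ref{lemma: mSp} and \cite{Zhu2017BE}, rather than the sharp value $A$ obtained here.
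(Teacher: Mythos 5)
Your proof is correct, and it takes a genuinely different route from the paper's. The paper works entirely at the level of the triple-product coefficients: it expands $\int \tp\tm\,\Phi_l\,d\pi(z)=\sum_{i,j}\tp_i\tm_j S^l_{ij}$, bounds $|S^l_{ij}|\leq \ta_{\min\{i,j,l\}}\chi_{ijl}$ using the fact that $S^l_{ij}$ vanishes unless the indices satisfy a triangle-type condition (a consequence of $\Phi_i\Phi_j$ being a polynomial of degree $i+j$), and then exploits the sparsity of $\chi_{ijl}$ --- at most $2i+1$ admissible $j$ for fixed $i,l$, and likewise for the sum over $l$ --- together with repeated Cauchy--Schwarz to avoid any factor of $K$; this is where the constant $4A$ and the weight $(i+1)^q$ with $q=p+2$ emerge. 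You instead collapse the sum over $l$ at the outset by Bessel's inequality (valid for any finite orthonormal set, completeness not needed), reduce to $\lVert\tp\tm\rVert_{L^2_\pi}^2\leq\lVert\tp\rVert_{L^\infty}^2\lVert\tm\rVert_{L^2_\pi}^2$, and obtain the weighted sum from the $L^\infty$ bound $\lVert\tp\rVert_{L^\infty}\leq\sum_i|\tp_i|(i+1)^p$ via Cauchy--Schwarz with the splitting $(i+1)^p=(i+1)^q(i+1)^{-2}$. Your argument is shorter, needs only orthonormality and Assumption \ref{cond: Phi} (not the support structure of $S^l_{ij}$), and gives the sharper constant $A$ in place of $4A$; your handling of the convergence caveat (the inequality is vacuous when the weighted series diverges) is also fine. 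What the paper's heavier coefficient-level machinery buys is reusability: in Lemma \ref{lemma: mSp} the analogous quantity is a weighted trilinear form $\sum_l\u_l^2(\sum_{i,j}\hm_iS^l_{ij}\hp_j)\hp_l$ in the Galerkin coefficients, where the weight $\u_l^2$ sits inside the sum over $l$ and Bessel's inequality no longer applies directly, so the $\chi_{ijl}$-sparsity argument is developed once and used in both places.
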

\begin{proof}
First we define a function $\chi_{ijl}$ of non-negative integer $i,j,l$, 
\begin{equation}
    \label{def of chi}
    \chi_{ijl} = \l\{
    \begin{aligned}
        &1, \qd \text{if}\qd i+j \geq l, \text{or } i+l \geq j, \text{or } j+l \geq i\\
        &0, \qd \text{ortherwise.}
    \end{aligned}
    \r.
\end{equation}
Then we note that 
\begin{equation}
\label{ineq_proc}
\begin{aligned}
    &S^l_{ij} = \chi_{ijl}\int \Phi_i\Phi_j\Phi_l \pi(z)dz \leq\chi_{ijl} \,\ta_{\min\{i,j,l\}} \ll \Phi_i \rl_\pi\ll \Phi_l\rl_\pi \leq \ta_{\min\{i,j,l\}}\chi_{ijl},
\end{aligned}
\end{equation}
with $\eta_i$ defined in (\ref{def of p}). Therefore,
\begin{equation*}
\begin{aligned}
    &\sum_{l=0}^K\(\int \tp\tm \Phi_l d\pi(z) \)^2
    =\sum_{l=0}^{K} \(\int \(\sum_{i\geq0}^{\infty}\tp_i\Phi_i\)\(\sum_{j\geq0}\tm_j\Phi_j\)\Phi_l d\pi(z)\)^2   \\
    =&  \sum_{l=0}^{K}
    \(\sum_{i\geq0}\sum_{j\geq0}\tp_i\tm_jS^l_{ij}\)^2  
    \leq  \sum_{l=0}^{K}
    \(\sum_{i\geq0}\sum_{j\geq0}\lv(i+1)^q\tp_i\rv\lv\tm_j\rv\frac{\ta_i}{(i+1)^q}\chi_{ijl}\)^2\\
    =&\sum_{l=0}^{K}
    \(\sum_{i\geq0}\lv(i+1)^q\tp_i\rv\frac{1}{(i+1)^{2}}\sum_{j\geq0}\lv\tm_j\rv\chi_{ijl}\)^2\\
    \leq &\sum_{l=0}^{K}
    \(\sum_{i\geq0}\frac{1}{(i+1)^{2}}\)\sum_{i\geq0}\frac{\((i+1)^q\tp_i\)^2}{(i+1)^{2}}\(\sum_{j\geq0}\lv\tm_j\rv\chi_{ijl}\)^2\\
    \leq& A\sum_{l=0}^{K}
    \sum_{i\geq0}\frac{\((i+1)^q\tp_i\)^2}{(i+1)^{2}}\sum_{j\geq0}\tm_j^2\chi_{ijl}\sum_{j\geq0}\chi_{ijl} 
    \leq  A\sum_{l=0}^{K}
    \sum_{i\geq0}\frac{2i+1}{(i+1)^{2}}\((i+1)^q\tp_i\)^2\sum_{j\geq0}\tm_j^2\chi_{ijl}\\
    \leq&  A\sum_{i\geq0}\frac{(2i+1)^2}{(i+1)^{2}}\((i+1)^q\tp_i\)^2\sum_{j\geq0}\tm_j^2
    \leq 4A\sum_{i\geq0}\((i+1)^q \tp_i \)^2 \sum_{i\geq0} \tm_i^2.
\end{aligned}
\end{equation*}
In the above estimates, the first inequality is because of (\ref{ineq_proc}), then the Cauchy-Schwartz inequality is applied in the second and third inequalities. In the fourth inequality, one uses the property of $\chi_{ijl}$, since for fixed $i, l$, $\chi_{ijl}$ is nonzero only if $l-i \leq j \leq l+i$, which implies that $\sum_{j\geq0}\chi_{ijl} \leq (2i+1)$. Similar property is applied in the fifth inequality for $\sum_{l}\chi_{ijl}$. The last inequality comes from $(2i+1)^2 \leq 4(i+1)^2$. 
\end{proof}

\begin{lemma}
\label{lemma: non_spectral}
The following inequality holds 
\begin{equation*}
\begin{aligned}
     &\la \tp\tm - \hpk\hmk, a\epk+b\emk \ra \\
     \leq&  \(16c^2C_S\ll \tp \rl^2_{H^1_z} + \frac{b^2}{8}\)\ll\Bmk\rl^2+\(64c^2A\ll \Hmk_\u \rl^2+\frac{a^2}{8}\)\ll \Bpk \rl^2+ 16c^2C_S\ll \tp \rl^2_{H^1_z}\ll\pmk\rl_\pi^2 \\
    & + 64c^2A\ll \Hmk_\u \rl^2\ll \ppk \rl^2_\pi.
\end{aligned}
\end{equation*}
\end{lemma}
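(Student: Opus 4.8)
The plan is to reduce the single nonlinear coupling term to pieces that are each controlled either by the Sobolev embedding (\ref{def of C_S}) or by the triple-product estimate of Lemma \ref{lemma: spectral ineq}. First I would split the product difference using the add--subtract identity
\begin{equation*}
\tp\tm - \hpk\hmk = \tp(\tm - \hmk) + \hmk(\tp - \hpk),
\end{equation*}
so that one factor is always either the exact density $\tp$ or the numerical density $\hmk$. Inserting the error decompositions (\ref{eq: error_p})--(\ref{eq: error_m}), namely $\tm - \hmk = \pmk + \emk$ and $\tp - \hpk = \ppk + \epk$, turns $\la \tp\tm - \hpk\hmk, a\epk + b\emk\ra_\pi$ into two families of terms: those carrying the multiplier $\tp$, paired with the $\tm$-errors $\pmk,\emk$, and those carrying the multiplier $\hmk$, paired with the $\tp$-errors $\ppk,\epk$. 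This is exactly what produces the two types of constants, $C_S\ll\tp\rl^2_{H^1_z}$ and $A\ll\Hmk_\u\rl^2$, on the right-hand side.

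For the terms with the exact multiplier $\tp$, I would pull $\tp$ out in $L^\infty_z$ and bound the remaining integral by Cauchy--Schwarz in $L^2_\pi$, so a typical piece is estimated by $\ll\tp\rl_{L^\infty_z}\ll\cdot\rl_\pi\ll\cdot\rl_\pi$; the embedding (\ref{def of C_S}) then replaces $\ll\tp\rl^2_{L^\infty_z}$ by $C_S\ll\tp\rl^2_{H^1_z}$. For the terms with the numerical multiplier $\hmk$, the difficulty is that a product $\hmk\,g$ of $\hmk$ with a $\tp$-error $g$ no longer lives in the truncated subspace $\{\Phi_i\}_{i=0}^K$; the saving feature is that the test function $a\epk+b\emk$ does, so only the projection of $\hmk\,g$ onto that subspace contributes. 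I would therefore write the inner product as $\sum_{l=0}^K\(\int \hmk g\,\Phi_l\,d\pi(z)\)\psi_l$ with $\psi=a\epk+b\emk$, apply Cauchy--Schwarz in $l$, and invoke Lemma \ref{lemma: spectral ineq} with $\hmk$ in the role of the weighted factor to obtain $2\sqrt{A}\,\ll\Hmk_\u\rl\,\ll g\rl_\pi\,\ll\psi\rl_\pi$. This is precisely where the weight $\u_i=(i+1)^q$ and the constant $A$ enter, and it is what keeps the bound independent of $K$.

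Finally I would apply Young's inequality to each resulting product, choosing the splitting parameters (proportional to $c$, to match how this coupling enters the energy estimate (\ref{spectral est_1})) so that every factor $\ll\epk\rl^2_\pi=\ll\Bpk\rl^2$ is absorbed with coefficient $\tfrac{a^2}{8}$ and every factor $\ll\emk\rl^2_\pi=\ll\Bmk\rl^2$ with coefficient $\tfrac{b^2}{8}$, while the surviving pieces collect into $16c^2C_S\ll\tp\rl^2_{H^1_z}$ multiplying the $\tm$-errors and $64c^2A\ll\Hmk_\u\rl^2$ multiplying the $\tp$-errors. Summing the two families and tracking constants then yields the stated inequality. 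I expect the main obstacle to be the $\hmk$-multiplier terms: controlling the off-subspace product forces the use of the combinatorial triple-product bound of Lemma \ref{lemma: spectral ineq} together with the weighted norm $\ll\Hmk_\u\rl$, since a naive Cauchy--Schwarz estimate would reintroduce a $K$-dependent constant and destroy the spectral rate. The $\tp$-multiplier terms, by contrast, are routine once the Sobolev embedding is in hand.
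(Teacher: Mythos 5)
Your proposal follows essentially the same route as the paper: the same splitting $\tp\tm-\hpk\hmk=\tp(\emk+\pmk)+\hmk(\ppk+\epk)$, the Sobolev embedding (\ref{def of C_S}) for the $\tp$-multiplier terms, Lemma \ref{lemma: spectral ineq} with the weighted norm $\ll\Hmk_\u\rl$ for the $\hmk$-multiplier terms, and Young's inequality with the same absorption coefficients. The argument and the constants match the paper's proof.
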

\begin{proof}
First notice that 
\begin{equation*}
\begin{aligned}
    &\tp\tm - \hpk\hmk  = \(\tp\tm - \tp\hmk\) + \(\tp\hmk - \hpk\hmk\)
    = \underbrace{\tp(\emk+\pmk)}_{\circled{1}} +\underbrace{\hmk\(\ppk + \epk\)}_{\circled{2}}.
\end{aligned}
\end{equation*} 
Apply Young's inequality to the first part, one has 
\begin{equation}
\label{spectral nonlinear_1}
\begin{aligned}
    &\lv -c\la \circled{1}, a\epk + b\emk \ra \rv \\
    \leq& 16c^2C_S\ll \tp \rl_{H^1_z}^2\ll\Bmk\rl^2+ 16c^2C_S\ll \tp \rl_{H^1_z}^2\ll\pmk\rl_\pi^2 + \frac{a^2}{16}\ll \Bpk \rl^2+ \frac{b^2}{16}\ll \Bmk \rl^2,
\end{aligned}
\end{equation}
where the constant $C_S$ comes from the Sobolev Embedding (\ref{def of C_S}).
For the second part $\circled{2}$, using Lemma \ref{lemma: spectral ineq}, one has, 
\begin{equation}
\label{spectral nonlinear_2}
\begin{aligned}
    &\lv -c\la \circled{2} ,  a\epk+b\emk\ra_\pi \rv = c\(\int \circled{2} \bPk \pi(z)dz\)\cdot\(a\Bpk+b\Bmk\) \\
    \leq &16c^2\(\int \hmk\ppk \bPk \pi(z)dz\)^2 +16c^2 \(\int \hmk\epk \bPk \pi(z)dz\)^2 + \frac{a^2}{16}\ll \Bpk \rl^2+\frac{b^2}{16}\ll \Bmk \rl^2\\
    \leq &64c^2AC_S\(\sum_{i\geq0}\((i+1)^q \hm_i \)^2\) \(\ll\ppk \rl_\pi^2+ \ll \Bpk \rl^2\) + \frac{a^2}{16}\ll \Bpk \rl^2+\frac{b^2}{16}\ll \Bmk \rl^2\\
    = &64c^2AC_S\ll \Hmk_\u \rl^2\(\ll \Bpk \rl^2 + \ll \ppk \rl^2_\pi\)+ \frac{a^2}{16}\ll \Bpk \rl^2+\frac{b^2}{16}\ll \Bmk \rl^2.
\end{aligned}
\end{equation}
Adding (\ref{spectral nonlinear_1}) and (\ref{spectral nonlinear_2}) together completes the proof.
\end{proof}

\section{Numerical examples}
\label{sec: numerics}
\subsection{Coefficient of variation}
We want to check how the { presence of $\mu$RNA influences the noise in the concentration of unbound mRNA. One common way to perform this comparison is to compute the coefficient of variation (CV) of the mRNA content,  i.e. the ratio of the standard deviation to the mean. Indeed, we expect the presence of $\mu$RNA to reduce the mean in the content of mRNA, simply because binding to $\mu$RNA reduces the amount of unbound mRNA. Since we deal with distributions on the positive real line, the reduction of the mean is also likely to reduce the variance. However, we wish to show that the variance reduction obtained by the presence of $\mu$RNA is actually bigger than the mere reduction which would be obtained as a consequence of a reduction of the mean. This is the reason of considering the CV. A reduction of the CV by the presence of $\mu$RNA shows a reduction of the variance which is larger than the corresponding reduction of the mean. Specifically, we compare the CV on $\pinf(z)$ obtained from system (\ref{eq: model}) which includes $\mu$RNA production with the CV on the steady state $\tilde \rho^\infty$ of the equation where binding with $\mu$RNA is ignored, namely}
\begin{equation}
\label{linear model}
\pt_t\p = S(z) -a\p.
\end{equation}
{ We let CV$_{L}$ be the CV of the steady state obtained from (\ref{linear model}), i.e. without $\mu$RNA, while CV$_{NL}$ is the CV of $\pinf$ obtained from (\ref{eq: model}), i.e. with $\mu$RNA ('L' and 'NL' stand for 'linear' and 'nonlinear' as \eqref{linear model} is linear while \eqref{eq: model} is nonlinear).}

Figure \ref{fig: CV_1} shows how { CV$_{L}$ - CV$_{NL}$} varies for different random sources. Here we set  $a = b = c = 1$, $S(z) = kz+d$, where $z$ follows the uniform distribution in $[-1/2,1/2]$. { The five lines correspond to the choices $d = 1/3, 1/2, 1, 2, 5$. The horizontal axis is the value of $k^2/2$ which is equal to the variance. }

\begin{figure}[htbp]
\includegraphics[width=1\textwidth]{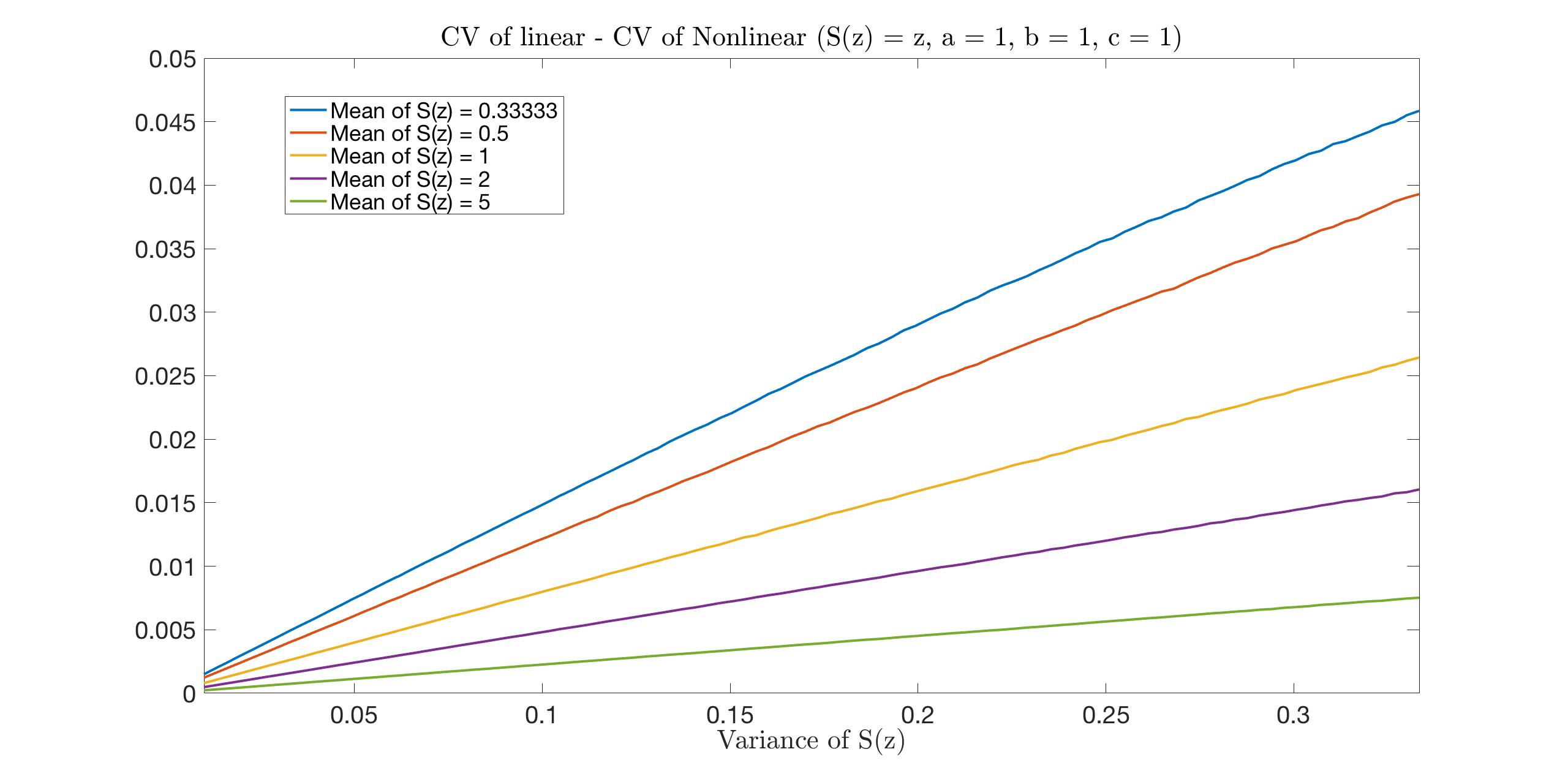}\\
\caption{$CV_{L} - CV_{NL}$ as a function of the variance of S(z) for different values of the mean of S(z) when $a = b = c = 1$, $S(z) = kz+d$, for $k\in [0,2]$ and $d = 1/3, 1/2,1,2,5$, where $z$ follows the uniform distribution in $[-1/2,1/2]$.}
\label{fig: CV_1}
\end{figure}

Figure \ref{fig: CV_2} { displays how CV$_{L}$ - CV$_{NL}$ depends on the parameters $a, b, c$.} Here we set $S(z) = 2z/3 + 1$, so the mean of the source is $1$ and the variance is $1/27$. 

\begin{figure}[htbp]
\includegraphics[width=1\textwidth]{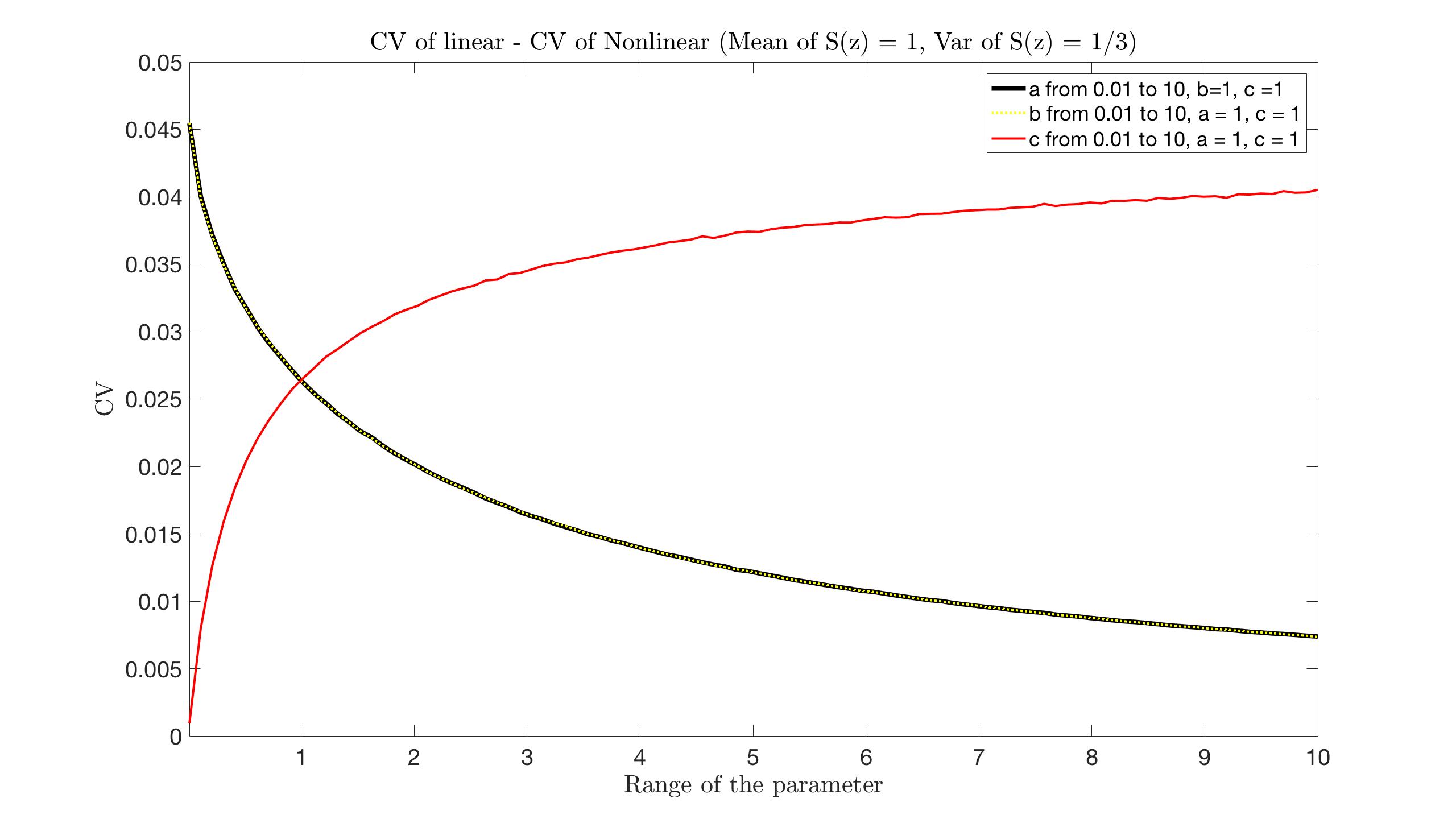}\\
\caption{$CV_{L} - CV_{NL}$ as a function of $a, b, c$, where we set $S(z) = 2z/3+1$, and $z$ follows the uniform distribution in $[-1/2,1/2]$.}
\label{fig: CV_2}
\end{figure}

From the two plots, one can see that the CV for the steady-state of the nonlinear system is always smaller than that of the linear system. Thus, the influence of $\mu$RNAs is always to decrease the uncertainty on the mRNA content. From Fig. \ref{fig: CV_1} we see that the influence of $\mu$RNAs increases as the intensity of the source decreases and its variance increases. From Fig. \ref{fig: CV_2} we deduce that the influence of $\mu$RNAs increases as their binding rate to mRNA $c$ increases. A larger binding rate means less unbound for mRNAs or $\mu$RNAs, which has a similar effect as a reduction of the source intensity. Indeed, the influence of $\mu$RNAs increases in both cases. Finally, From Fig. \ref{fig: CV_2}, an increase of either the degradation rate $a$ of mRNA or the degredation rate $b$ of $\mu$RNA both decrease the influence of $\mu$RNAs. In the latter case, this is understandable as the amount of unbound $\mu$RNA decreases and less noise reduction occurs. In the former one, this is less intuitive, as an increase of the degradation rate of mRNA should lead to a decrease of mRNA concentration relative to the $\mu$RNA concentration and should make the mRNA more sensitive to the presence of $\mu$RNA. This shows that nonintuitive outcome may occur from random perturbation of chemical kinetic systems. 

Finally, in spite of repeated attempts, we were not able to show the reduction of the CV in the presence of $\mu$RNA analytically. This may be the indication that for some randomness, this reduction does not  happen.

\section*{Acknowledgements}

PD acknowledges support by the Engineering and Physical Sciences 
Research Council (EPSRC) under grants no. EP/M006883/1 and EP/N014529/1, by the Royal 
Society and the Wolfson Foundation through a Royal Society Wolfson 
Research Merit Award no. WM130048 and by the National Science 
Foundation (NSF) under grant no. RNMS11-07444 (KI-Net). PD is on leave 
from CNRS, Institut de Math\'ematiques de Toulouse, France. 
PD thanks Matthias Merkenschlager from Faculty of Medicine, Institute of Clinical Sciences, Imperial College London, for bringing his attention on this problem. 

SJ acknowledges support from the Department of Mathematics of Imperial College London, where part of this research was conducted, through a Nelder fellowship and NSFC grants No. 11871297 and No. 31571071

YZ gratefully acknowledges the hospitality of the Department of Mathematics of Imperial College London, where part of this research was conducted.

\section*{Data availability}

No new data were collected in the course of this research.

\appendix
\section*{Appendices}
\addcontentsline{toc}{section}{Appendices}
\renewcommand{\thesubsection}{\Alph{subsection}}

\subsection{Proof of Proposition \ref{coro: pos def}}
\label{proof of energy est of t}
\setcounter{equation}{0}
\setcounter{theorem}{0}
\renewcommand\theequation{A.\arabic{equation}}
\renewcommand\thetheorem{A.\arabic{theorem}}
\begin{proof}
For any $K+1$-dimensional vector $a \neq 0$, 
\begin{equation}
\begin{aligned}
    a^\top Fa = &\sum_{i,j = 1}^{K} a_i F_{ij} a_j = \sum_{i,j = 1}^{K}  \int f(z)a_i\Phi_i(z)a_j\Phi_j(z)\pi(z)dz \\
    =&  \int f(z)\(\sum_{i=1}^Ka_i\Phi_i(z)\)\(\sum_{j = 1}^Ka_j\Phi_j(z)\)\pi(z)dz\\
    =& \int f(z)\(\sum_{i=1}^Ka_i\Phi_i(z)\)^2\pi(z)dz
    \geq C\sum_{i=1}^Ka_i^2,
\end{aligned}
\end{equation}
where the last inequality comes from the orthonormal relationship of $\Phi_i$ as shown in (\ref{eq: orthonormal}).
\end{proof}

\subsection{Proof of Lemma \ref{lemma: mSp}}
\label{proof: mSp}
\setcounter{equation}{0}
\setcounter{theorem}{0}
\renewcommand\theequation{B.\arabic{equation}}
\renewcommand\thetheorem{B.\arabic{theorem}}    
\begin{proof}
First note that for each $l$,
\begin{equation}
\label{eq: Sijl_twoparts}
\begin{aligned}
    &\(\u_l\sum_{i,j}\hm_i \hp_jS^l_{ij}\)\(\u_l\hp_l\) 
    \leq \frac{1}{2\g}\(\u_l\(\sum_{i\leq j } + \sum_{i> j}\)S^l_{ij} \lv\hm_i\hp_j\rv\)^2 + \frac{\g}{2}\(\u_l\hp_l\)^2\\
    \leq& \frac{\u_l^2}{\g} \(\sum_{i\leq j }\ta_i\chi_{ijl} \lv\hm_i\hp_j\rv\)^2 + \frac{\u_l^2}{\g}\(\sum_{i> j }\ta_j\chi_{ijl} \lv\hm_i\hp_j\rv\)^2 + \frac{\g}{2}\(\u_l\hp_l\)^2.
\end{aligned}
\end{equation}
The second inequality is because
\begin{equation}
\label{upper bound S_ij}
\begin{aligned}
    &S^l_{ij} = \chi_{ijl}\int \Phi_i\Phi_j\Phi_l \pi(z)dz \leq\chi_{ijl} \,\ta_{\min\{i,j,l\}} \ll \Phi_i \rl_\pi\ll \Phi_l\rl_\pi \leq \ta_{\min\{i,j,l\}}\chi_{ijl},
\end{aligned}
\end{equation}
where the first equality comes from the orthornality of $\Phi_i$, and $\ta_i$ defined in (\ref{def of p}) is the upper bound for $\Phi_i$. We estimate the first part of (\ref{upper bound S_ij}) as follows, 
\begin{equation*}
\begin{aligned}
     &\(\sum_{i\leq j }\ta_i\chi_{ijl} \lv\hm_i\hp_j\rv\)^2
    =\(\sum_{i\leq j }\frac{\chi_{ijl}}{(i+1)^2\u_j} \lv\u_i\hm_i\u_j\hp_j\rv\)^2 \\
    \leq & \(\sum_i\frac{1}{(i+1)^2}\) \sum_i \(\frac{\lv\mu_i\hm_i\rv}{i+1} \sum_{j\geq i}\frac{\chi_{ijl}\lv\u_j\hp_j\rv}{\u_j}\)^2\\
    \leq& A \sum_i \(\frac{\mu_i\hm_i}{i+1}\)^2 \(\sum_{j\geq i}\frac{\chi_{ijl}\lv\u_j\hp_j\rv}{\u_j}\)^2
    \leq A \sum_i \frac{\lv\mu_i\hm_i\rv^2}{(i+1)^2} \l[\sum_{j\geq i}\(\frac{1}{\u_j}\)^2\chi_{ijl} \sum_{j\geq i}\(\u_j\hp_j\)^2\chi_{ijl}\r].
\end{aligned}
\end{equation*}
The first equality is because of the definition of $\mu_i$ in (\ref{def of u}), then the Cauchy-Schwarz inequality is applied to the first and the last inequalities, while the second inequality comes from the definition of $A$ in (\ref{def of A}). Therefore, 
\begin{equation*}
\begin{aligned}
     &\sum_{l=0}^{K} \mu_l^2\(\sum_{i\leq j }\ta_i\chi_{ijl} \lv\hm_i\hp_j\rv\)^2
    \leq A \sum_{l=0}^{K} \sum_{i=0}^{K} \frac{\lv\mu_i\hm_i\rv^2}{(i+1)^2}  \l[\sum_{j\geq i}\(\frac{\u_l}{\u_j}\)^2\chi_{ijl} \sum_{j\geq i}\(\u_j\hp_j\)^2\chi_{ijl}\r]  \\
    \leq& 2^{2q}A \sum_{i=0}^{K} (2i+1)\frac{\lv\mu_i\hm_i\rv^2}{(i+1)^2}  \sum_{j\geq i}\(\u_j\hp_j\)^2\sum_{l=0}^{K} \chi_{ijl} \\
    \leq& 2^{2q}A \sum_{i=0}^{K} \frac{(2i+1)^2}{(i+1)^2}\(\mu_i\hm_i\)^2  \sum_{j\geq i}\(\u_j\hp_j\)^2
    \leq 2^{2q+2}A \ll\Hmk_\u\rl^2  \ll\Hpk_\u\rl^2.
\end{aligned}
\end{equation*}
Since $\chi_{ijl}$ is nonzero only if when $l \leq i+j$, and for $j\geq i$, this means $l \leq 2j$, so $\(\frac{\u_l}{\u_j}\)^2 \leq \frac{(2j+1)^{2q}}{(j+1)^{2q}} \leq 2^{2q}$. Furthermore, for fixed $i, l$, $\chi_{ijl}$ is nonzero only when $l-i\leq j \leq l+i$, this means the number of nonzero $\chi_{ijl}$ is $(2i+1)$. Therefore, \begin{equation*}
\sum_{j\geq i}\(\frac{\u_l}{\u_j}\)^2\chi_{ijl} \leq \frac{(2j+1)^q}{(j+1)^q} \leq 2^{2q}(2i+1),
\end{equation*} which gives the second inequality.  Similarly, one can obtain the third inequality. The fourth inequality is because of $(2i+1)^2\leq 2^2(i+1)^2$. \\
Since $i,j$ are symmetric, so the second part of (\ref{eq: Sijl_twoparts}) should have the same bound, hence, 
\begin{equation}
\label{stab nl_2}
    \sum_{l=0}^n\la \u_l\sum_{i,j} \hp_i \hm_jS^l_{ij}, \,\u_l\hp_l \ra \leq \frac{2^{2q+3}A}{\g} \ll\Hmk_\u\rl \ll \Hpk_\u \rl^2 + \frac{\g}{2}\ll \Hpk_\u \rl^2.
\end{equation}

For the second inequality (\ref{stab rinf}), first notice that, 
\begin{equation*}
\begin{aligned}
    &-\sum_{l=0}^K\la \rinf \htk \Phi_l, \,\u_l^2\hht_l \ra_\pi = -\sum_{l=0}^K\la \(\rinf_0 + \sum_{j\geq 1}\rinf_j\Phi_j\) \(\sum_{i=0}^K\hht_i\Phi_i\)\Phi_l, \,\u_l^2\hht_l \ra_\pi \\
    =&-\la\rinf_0 ,   \sum_{l=0}^K\sum_{i=0}^K\u_l^2\hht_i\hht_l\Phi_i\Phi_l \ra_\pi - \sum_{l=0}^K\sum_{j\geq1}\sum_{i=0}^K\u_l^2 \rinf_j \hht_i S^l_{ij}\hht_l \\
    =& -\rinf_0 \ll \Htk_\u\rl^2  - \sum_{l=0}^K\sum_{j\geq1}\sum_{i=0}^K\u_l^2 \rinf_j \hht_i S^l_{ij}\hht_l
\end{aligned}
\end{equation*}
where $S^l_{ij}$ is defined in (\ref{def E}). The third equality is because of the orthogonality of $\{\Phi_i\}_{i\geq0}$. For the last term, using the same technique one uses to get (\ref{stab nonlinear}), then one has
\begin{equation*}
\begin{aligned}
    &-\sum_{l=0}^K\la \rinf \htk \Phi_l, \,\u_l^2\hht_l \ra_\pi 
    \leq -\rinf_0 \ll \Htk_\u\rl^2 + \frac{2^{2q+3}A}{\g}\(\sum_{j\geq1}\(\u_j\rinf_j\)^2\)\ll \Htk_\u \rl^2 + \frac{\g}{2}\ll \Htk_\u \rl^2
\end{aligned}
\end{equation*}
Then set $\g = \rinf_0$, and by the condition on $\sum_{j \geq 1}(\u_j\rinf_j)^2 \leq \frac{\(\rinf_0\)^2}{2^{2q+3}A}$, one completes the proof for the second inequality (\ref{stab rinf}).     
    \end{proof}

\bibliographystyle{plain}

\begin{thebibliography}{10}

\bibitem{babuska2004galerkin}
Ivo Babuska, Ra{\'u}l Tempone, and Georgios~E Zouraris.
\newblock Galerkin finite element approximations of stochastic elliptic partial
  differential equations.
\newblock {\em SIAM Journal on Numerical Analysis}, 42(2):800--825, 2004.

{
\bibitem{Bleris_MolSystBiol11}
Leonidas Bleris, Zhen Xie, David Glass, Asa Adadey, Eduardo Sontag and Yaakov Benenson.
\newblock Synthetic incoherent feedforward circuits show adaptation to the amount of their genetic template.
\newblock {\em Molecular Systems Biology} 7:519, 2011.

\bibitem{Blevins_PlosGenetics15}
Rory Blevins, Ludovica Bruno, Thomas Carroll, James Elliott, Antoine Marcais, Christina Loh, Arnulf Hertweck, Azra Krek, Nikolaus Rajewsky, Chang-Zheng Chen, Amanda G. Fisher and Matthias Merkenschlager.
\newblock microRNAs regulate cell-to-cell variability of endogenous target gene expression in developing mouse thymocytes.
\newblock {\em PLOS Genetics} 11(2):e1005020, 2015.

\bibitem{Bosia_BMCSystBiol12}
Carla Bosia, Matteo Osella, Mariama El Baroudi, Davide Cor\`a and Michele Caselle.
\newblock Gene autoregulation via intronic microRNAs and its functions.
\newblock {\em  BMC Systems Biology}  6:131, 2012. 
}


\bibitem{canuto1982approximation}
Claudio Canuto and Alfio Quarteroni.
\newblock Approximation results for orthogonal polynomials in sobolev spaces.
\newblock {\em Mathematics of Computation}, 38(157):67--86, 1982.

\bibitem{cohen2010convergence}
Albert Cohen, Ronald DeVore, and Christoph Schwab.
\newblock Convergence rates of best n-term galerkin approximations for a class
  of elliptic spdes.
\newblock {\em Foundations of Computational Mathematics}, 10(6):615--646, 2010.

\bibitem{cohen2011analytic}
Albert Cohen, Ronald DeVore, and Christoph Schwab.
\newblock Analytic regularity and polynomial approximation of parametric and
  stochastic elliptic pde's.
\newblock {\em Analysis and Applications}, 9(01):11--47, 2011.

{

\bibitem{Deg_Her_Mir}
Pierre Degond, Maxime Herda, Sepideh Mirrahimi. 
\newblock A Fokker-Planck approach to the study of robustness in gene expression.
\newblock submitted. 

\bibitem{Gillespie_JPhysChem77}
Daniel T. Gillespie.
\newblock Exact stochastic simulation of coupled chemical reactions.
\newblock {\em The Journal of Physical Chemistry} 81(25): 2340--2361; 1977.


\bibitem{Herranz_GenesDev10}
H\'ector Herranz and Stephen M. Cohen.
\newblock MicroRNAs and gene regulatory networks: managing the impact of noise in biological systems.
\newblock {\em Genes Dev.} 24:1339--1344, 2010.
} 

\bibitem{HuJin18}
Jingwei Hu and Shi Jin.
\newblock Uncertainty quantification for kinetic equations.
\newblock {\em Uncertainty Quantification for Kinetic and Hyperbolic
  Equations}, pages 193--229, 2017.

\bibitem{jin2016august}
Shi Jin, Jian-Guo Liu, and Zheng Ma.
\newblock Uniform spectral convergence of the stochastic galerkin method for
  the linear transport equations with random inputs in diffusive regime and a
  micro-macro decomposition based asymptotic preserving method.
\newblock {\em Research in Math. Sci.,(in honor of the 70th
  birthday of Bjorn Engquist)}, 4:15, 2017. DOI 10.1186/s40687-017-0105-1.

\bibitem{ZhuJin18}
Shi Jin and Yuhua Zhu.
\newblock Hypocoercivity and uniform regularity for the
  Vlasov-Poisson-Fokker-Planck system with uncertainty and multiple scales.
\newblock {\em SIAM J. Math. Anal.}, to appear.

\bibitem{li2016uniform}
Qin Li and Li~Wang.
\newblock Uniform regularity for linear kinetic equations with random input
  based on hypocoercivity.
\newblock {\em SIAM/ASA J. Uncertainty Quantification}, 5(1):1193--1219, 2017.

\bibitem{JinLiu18}
Liu Liu and Shi Jin.
\newblock Hypocoercivity based sensitivity analysis and spectral convergence of
  the stochastic galerkin approximation to collisional kinetic equations with
  multiple scales and random inputs.
\newblock {\em (SIAM) Multiscale Modeling and Simulation}, 16:1085--1114, 2018.

{
\bibitem{Osella_PlosCB11}
Matteo Osella, Carla Bosia, Davide Cor\`a, Michele Caselle, 
\newblock The role of incoherent microRNA-mediated feedforward loops in noise buffering.
\newblock {\em PLoS Computational Biology}, 7: e1001101, 2011.
}

\bibitem{Jintwophase}
Ruiwen Shu and Shi Jin.
\newblock Uniform regularity in the random space and spectral accuracy of the
  stochastic galerkin method for a kinetic-fluid two-phase flow model with
  random initial inputs in the light particle regime.
\newblock {\em Math. Model Num. Anal.}, to appear.

{
\bibitem{vanKampen}
N. G. van Kampen.
\newblock Stochastic processes in physics and chemistry.
\newblock North Holland, 1981.
}


\bibitem{xiu2009efficient}
Dongbin Xiu and Jie Shen.
\newblock Efficient stochastic galerkin methods for random diffusion equations.
\newblock {\em Journal of Computational Physics}, 228(2):266--281, 2009.

\bibitem{ZhuVPFP18}
Yuhua Zhu.
\newblock A local sensitivity and regularity analysis for the
  Vlasov-Poisson-Fokker-Planck system with multi-dimensional uncertainty and
  the spectral convergence of the stochastic galerkin method.
\newblock Preprint.

\bibitem{Zhu2017BE}
Yuhua Zhu.
\newblock Sensitivity analysis and uniform regularity for the Boltzmann
  equation with uncertainty and its stochastic galerkin approximation.
\newblock Preprint.

\end{thebibliography}

\end{document}